\documentclass[11pt, twoside]{article}
\usepackage{amsmath,amssymb,amsthm,amsfonts}
\usepackage[small,bf]{caption}
\usepackage{graphicx}
\usepackage[T1]{fontenc}
\usepackage{verbatim}
\usepackage{mathrsfs}


%
\def\sqr#1#2{{\vcenter{\vbox{\hrule height.#2pt
              \hbox{\vrule width.#2pt height#1pt \kern#1pt \vrule width.#2pt}
          \hrule height.#2pt}}}}
%

%

%
%
\def \E{\mathbb{E}}
\def \P{\mathbb{P}}

\def\dbE{\hbox{\rm l\negthinspace E}}

\def\dbF{\hbox{\rm l\negthinspace F}}

\def\dbP{\hbox{\rm l\negthinspace P}}

\def\dbR{\hbox{\rm l\negthinspace R}}

%
%

%
%

%
%
\def\cA{{\cal A}}

\def\cF{{\cal F}}

\def\cK{{\cal K}}
\def\cL{{\cal L}}
\def\cM{{\cal M}}

\def\cO{{\cal O}}

\def\cS{{\cal S}}

\newcommand{\abs}[1]{\left\vert#1\right\vert}

\def\sqr#1#2{{\vcenter{\vbox{\hrule height.#2pt
              \hbox{\vrule width.#2pt height#1pt \kern#1pt \vrule width.#2pt}
              \hrule height.#2pt}}}}

\def\dbR{{\mathop{\rm l\negthinspace R}}}

\def\3n{\negthinspace \negthinspace \negthinspace }
\def\2n{\negthinspace \negthinspace }
\def\1n{\negthinspace }

\def\dbE{{\mathop{\rm l\negthinspace E}}}
\def\dbF{{\mathop{\rm l\negthinspace F}}}
\def\dbP{{\mathop{\rm l\negthinspace F}}}

\def\ds{\displaystyle}

\def\dbP{{\mathop{\rm l\negthinspace P}}}
\def\dbR{{\mathop{\rm l\negthinspace R}}}
\def\={\buildrel \triangle \over =}

%
%

%
%

%
%
\def\cA{{\cal A}}

\def\cF{{\cal F}}

\def\cK{{\cal K}}
\def\cL{{\cal L}}
\def\cM{{\cal M}}

\def\cO{{\cal O}}

\def\cS{{\cal S}}

%

%
%

\def\max{\mathop{\rm max}}

\def\sup{\mathop{\rm sup}}

\def\inf{\hbox{\rm inf$\,$}}
\def\essinf{\hbox{\rm ess$\,$\rm inf$\,$}}

\def\as{\hbox{\rm a.s.{ }}}

\def\({\Big (}
\def\){\Big )}
\def\[{\Big[}
\def\]{\Big]}
\def\be{\begin{equation}}
\def\bel{\begin{equation}\label}
\def\ee{\end{equation}}
\def\bt{\begin{theorem}}
\def\bcd{\begin{condition}}
\def\ecd{\end{condition}}
\def\et{\end{theorem}}
\def\bc{\begin{corollary}}
\def\ec{\end{corollary}}
\def\bd{\begin{definition}}
\def\ede{\end{definition}}
\def\bl{\begin{lemma}}
\def\el{\end{lemma}}
\def\bp{\begin{proposition}}
\def\ep{\end{proposition}}
\def\br{\begin{remark}}
\def\er{\end{remark}}
\def\ba{\begin{array}}
\def\ea{\end{array}}
\def\ed{

\begin{document}

\title{A Full Balance Sheet Two-modes Optimal Switching problem\thanks{To appear in Stochastics.}\thanks{Acknowledgements: We would like to thank the anonymous referees for their insightful remarks that helped improve both the content and the presentation of the paper. The financial support from The Swedish Export Credit Corporation (SEK) is gratefully acknowledged.}}

\author{Boualem Djehiche\thanks{Department of Mathematics, KTH Royal Institute of Technology, SE-100 44 Stockholm, Sweden. \emph{E-mail: boualem@math.kth.se}} \and  Ali Hamdi\thanks{Department of Mathematics, KTH Royal Institute of Technology, SE-100 44 Stockholm, Sweden. \emph{E-mail: ali.hamdi@math.kth.se}}}

\date{\today}

\maketitle


\begin{abstract}
We formulate and solve a finite horizon full balance sheet two-modes optimal switching problem related to
trade-off strategies between expected profit and cost yields. Given the current mode, this model allows for either a switch to the other mode or termination of the project, and this happens for both sides of the balance sheet. A novelty in this model is that the related obstacles are nonlinear in the underlying yields, whereas, they are linear in the standard optimal switching problem. The optimal switching problem is formulated in terms of a system of  Snell envelopes for the profit
and cost yields which act as obstacles to each other. We prove existence of a continuous minimal solution of this system using an approximation scheme and fully characterize the optimal switching strategy.
\end{abstract}

\begin{flushleft}\textbf{Keywords:} real options, backward SDEs, Snell
envelope, stopping time, optimal switching, impulse control, balance sheet, merger and acquisition.\end{flushleft}

\begin{flushleft}\textbf{AMS Classification subjects:} 60G40, 93E20, 62P20, 91B99.\end{flushleft}


\section{Introduction}

Optimal switching can relate to many practical applications. One may think for example of the problem one faces when there are a number of profit generating companies or investment projects in a conglomerate, and one wishes to switch the resource allocation between them in order to maximize the expected profit yield, $J$, depending on the random performance of the companies. The functional $J$ is the expectation of an integral over running profits (which can represent the profits
per unit time in the different modes), minus the switching costs.

Consider the case where there is only one company, and
the only two modes of switching are active (mode 1) and inactive (mode 2). Then the
problem reduces to finding an optimal strategy for starting and stopping
production in this company. This problem is commonly known as the
starting and stopping problem and amounts to finding an optimal sequence
of stopping times $\delta^*=\left\{ \tau_{n}^{*}\right\} _{n\geq1}$,
at which a switch between the two modes is made, to maximize $J$.

This formulation of the starting and stopping problem is studied in \cite{HamadeneJeanblanc07}, where
the problem is completely solved in finite horizon when the driving
process (e.g. price of some commodity) is adapted to the Brownian filtration. By the Dynamic Programming Principle, the problem is shown to be
equivalent to the existence of a pair of adapted processes $\left(Y^{1},Y^{2}\right)$,
which satisfies a system of Snell envelopes of the form 
\begin{equation*}
\begin{split}
Y_{t}^{1} = \esssup_{\tau\geq t}\mathbb{E}\big[ & \text{Profit over $[t,\tau]$ given mode } 1 \text{ at time } t \\
						& \text{ and switch to mode }2\text{ at time }\tau\big] \\
Y_{t}^{2} = \esssup_{\tau\geq t}\mathbb{E}\big[ & \text{Profit over $[t,\tau]$ given mode } 2\text{ at time } t \\
						& \text{ and switch to mode }1\text{ at time }\tau\big].
\end{split}
\end{equation*}

Hence, $Y_{t}^{1}$ can be interpreted as the maximal expected profit
given that at time $t$ mode $1$ is activated, and $Y_{t}^{2}$ can
be interpreted as the same given that at time $t$ mode $2$ is activated.
Furthermore, the existence of the pair $\left(Y^{1},Y^{2}\right)$ gives the optimal strategy
of the problem as a sequence of stopping times where a switch between
the two modes is made.

This class of switching problems and various extensions to multiple modes has recently attracted a lot of interest. In the following non-exhaustive list of references \cite{carmonaludkovski, ludkovski, cek1, cek2, ek3, djehshah, DjehicheHamadenePopier09, DjehicheHamadeneMorlais11, Hamadene-elasri, HamadeneZhang10, Hammorlais13, Hutang, marcus1, marcus2, marcus3, magnus, zervos1,zervos2,pham1,pham2} the authors consider various aspects of the multiple modes switching problem, where only switching between the different modes is allowed until the end of the time horizon. Roughly, the setting assumes only that
the state process, including the switching costs, are general processes adapted to the Brownian filtration. The solution to the problem is found in a manner similar
to \cite{HamadeneJeanblanc07}, namely using systems of Snell envelopes
and their connection to a system of RBSDEs with interconnected obstacles. The findings include
existence, uniqueness, stability and numerics of the solution of these RBSDE, approximations of the optimal switching strategies as well as filtering and partial information. In \cite{HamadeneZhang10} (see also the references therein), the authors consider the optimal switching problem with multiple modes under Knightian uncertainty and with recursive utilities. The setup is equivalent to the solution of a system of RBSDEs with interconnected generators and obstacles. When the state process is Markovian, it is shown that the associated vector of value functions provides a viscosity solution to a system of variational
inequalities with interconnected generators and obstacles. Common to these papers is that the obstacles are linear in the underlying yields. Moreover, their structure point only to one direction i.e. they are either upper or lower barriers. An extension of the above mentioned switching mechanism to include e.g. abandonment, is discussed, in the two-modes case, in \cite{DjehicheHamadene09}, where the finite horizon starting and stopping problem with risk of abandonment is studied. Abandonment here refers to the risk that the investment project may be abandoned or definitely closed, if it is found to be unprofitable.
In that paper the yield of a strategy is also defined by means of
a functional $J\left(\delta\right)$, with the addition of costs emerging from a possible abandonment
of the project. The authors show that this formulation of the problem
is also equivalent to the existence of a system of Snell envelopes
$\left(Y^{1},Y^{2}\right)$. Moreover, this system provides an optimal
strategy $\delta^{*}$ by proving that if the system exists, it is unique. Furthermore, 
it holds that $Y_{0}^{1}=J\left(\delta^{*}\right)$. Hence, one of
the main results in \cite{DjehicheHamadene09} is the proof of existence and uniqueness
of $\left(Y^{1},Y^{2}\right)$. However, as it is noted in
the article, the addition of the risk of abandonment makes it very
difficult and highly nonlinear to find this strategy, though there are numerical methods of finding appropriate approximations. A further extension related to this paper is considered in \cite{DjehicheHamadeneMorlais11}. Instead of only
maximizing the expected profit yield with respect to a family of admissible stopping times, the optimal stopping problem is related to trade-off strategies between the expected profit yield (to be maximized) and  the expected cost yield (to be minimized). Hence, the expected profit and cost yield processes will act as obstacles to each other. Moreover, the optimal stopping time is shown to be the first time that the two meet (minus termination costs). Like in the papers mentioned above, the connection of this system
of Snell envelopes with a system of RBSDEs with interconnected obstacles is used to obtain both a minimal and a maximal solution to the problem via appropriate approximation schemes. Moreover, counter-examples are provided to show that the system of RBSDEs does not have a unique solution.

\section{Problem formulation}

In this paper we study a two-modes optimal switching problem for the full balance sheet i.e. we take into account the trade-off strategies between expected profit and expected cost yields. It is a combination of ideas and techniques for the two-modes starting and stopping problem developed in \cite{HamadeneJeanblanc07} and \cite{DjehicheHamadene09}, and optimal stopping involving the full balance sheet motivated by problems occurring in merger and acquisition operations introduced in \cite{DjehicheHamadeneMorlais11}. This problem is a natural extension of \cite{DjehicheHamadene09} and \cite{DjehicheHamadeneMorlais11} since it incorporates both the action of switching between modes and the action of terminating a project, if it is found to be unprofitable. For example, being in mode 1, one may want to switch
to mode $2$ at time $t$, if  the expected profit yield, $Y^{+,1}$, in this mode falls below the
maximum of the expected profit yield in mode $2$, $Y^{+,2}$, minus a switching cost $\ell_1$ from mode $1$ to mode $2$, and the expected cost yield
in mode $1$, $Y^{-,1}$ minus the cost $a_1$ incurred when exiting/terminating the production while in mode 1, i.e.
$$
Y_{t}^{+,1} \le (Y_{t}^{+,2}-\ell_1(t))\vee (Y_{t}^{-,1}-a_1(t)),
$$    
or, if the expected cost yield in mode 1, $Y^{-,1}$, rises above the minimum of the expected cost yield
in mode $2$, $Y^{-,2}$, plus the switching cost $\ell_1$ from mode 1 to mode 2, and the expected profit yield in mode 1, $Y^{+,1}$ plus the benefit $b_1$ incurred when exiting/terminating the production while in mode 1, i.e.
$$
Y_{t}^{-,1}\ \ge (Y_{t}^{-,2}+\ell_1(t))\wedge (Y_{t}^{+,1}+b_1(t)).
$$
A similar switching criterion holds from mode 2 to mode 1. Hence, the problem formulation in this paper allows for two possible actions, on both sides of the balance sheet, given the current mode, a switch to the other mode or the termination of the project. A novelty in this model is that the related obstacles are nonlinear in the underlying yields, whereas they are linear in the standard optimal switching problem. 

If $\cF_t$ denotes the history of the production up to time $t$, and $\psi_i^+$ and $\psi_i^-$ denote respectively the running profit and cost per unit time $dt$ and $\xi_i^+$ and $\xi_i^-$ are the profit and cost yields at the horizon $T$, while in mode $i=1,2$, the expected profit yield, while in mode 1, can be expressed in terms of a Snell envelope as follows:
\begin{equation}
\begin{split}
Y_{t}^{+,1} &= \underset{\tau\ge t}\esssup\dbE \bigg[ \int_t^{\tau}\psi_1^+(s)ds + S_{\tau}^{+,1} \ind{\tau<T} +\xi_1^+\ind{\tau=T}\bigg|\cF_t \bigg] \\
S_{\tau}^{+,1} &= \left((Y_{\tau}^{+,2}-\ell_1(\tau))\vee (Y_{\tau}^{-,1}-a_1(\tau))\right),
\end{split}
\end{equation}
where, the supremum is taken over exit times from the production in mode 1. Hence, a plausible guess of an optimal switching strategy would be at the following random time 
\begin{equation}\label{tau1}
\tau_t^{+,1}=\inf\{s\ge t,\,\, Y_{s}^{+,1}= (Y_{s}^{+,2}-\ell_1(s))\vee (Y_{s}^{-,1}-a_1(s))\}\wedge T,
\end{equation}
in which case we would obtain
\begin{equation}
Y_{t}^{+,1} = \dbE\bigg[ \int_t^{\tau_t^{+,1}}\psi_1^+(s)ds + S_{\tau_t^{+,1}}^{+,1} \ind{\tau_t^{+,1}<T} +\xi_1^+\ind{\tau_t^{+,1}=T}\bigg|\cF_t\bigg].
\end{equation}
Furthermore, the Snell envelope expression of the expected cost yield, while in mode 1, reads
\begin{equation}
\begin{split}
Y_{t}^{-,1} &= \underset{\sigma\ge t}\essinf\dbE\bigg[ \int_t^{\sigma}\psi_1^-(s)ds + S_{\sigma}^{-,1} \ind{\sigma<T} + \xi_1^-\ind{\sigma=T}\bigg|\cF_t\bigg] \\
S_{\sigma}^{-,1} &= \left((Y_{\sigma}^{-,2}+\ell_1(\sigma))\wedge(Y_{\sigma}^{+,1}+b_1(\sigma))\right),
\end{split}
\end{equation}
where, the infimum is taken over exit times from the production in mode 1. Again, for any $t\le T$, the random time 
\begin{equation}\label{sigma1}
\sigma_t^{-,1}=\inf\{s\ge t,\,\, Y_{s}^{-,1}= (Y_{s}^{-,2}+\ell_1(s))\wedge(Y_{s}^{+,1}+b_1(s))\}\wedge T,
\end{equation}
would be a plausible optimal switching time, in which case we would obtain
\begin{equation}
\begin{split}
Y_{t}^{-,1}=\dbE\bigg[ & \int_t^{\sigma_t^{-,1}}\psi_1^-(s)ds + S_{\sigma_t^{-,1}}^{-,1} \ind{\sigma_t^{-,1}<T} +\xi_1^-\ind{\sigma_t^{-,1}=T} \bigg| \cF_t\bigg].
\end{split}
\end{equation}
The expected profit and cost yields $Y^{+,2}$ and $Y^{-,2}$, when the production is in mode 2, satisfy a similar set of Snell envelopes. As above, the corresponding optimal switching times would then be
\begin{equation}\label{tau2}
\tau_t^{+,2}=\inf\{s\ge t,\,\, Y_{s}^{+,2}= (Y_{s}^{+,1}-\ell_2(s))\vee(Y_{s}^{-,2}-a_2(s))\}\wedge T,
\end{equation}
and 
\begin{equation}\label{sigma2}
\sigma_t^{-,2}=\inf\{s\ge t,\,\, Y_{s}^{-,2}= (Y_{s}^{-,1}+\ell_2(s))\wedge(Y_{s}^{+,2}+b_2(s))\}\wedge T.
\end{equation}

Summing up, a full balance sheet switching problem amounts to establishing existence and uniqueness of the processes $\left(Y^{+,1},Y^{-,1},Y^{+,2},Y^{-,2}\right)$ that satisfy the following system of Snell envelopes with interconnected obstacles:
\be\label{SYS}\left\{
\begin{array}{lll}
Y_{t}^{+,1}=\underset{\tau\ge t}\esssup\dbE\Big[\int_t^{\tau}\psi_1^+(s)ds + S_{\tau}^{+,1}\ind{\tau<T}+\xi_1^+\ind{\tau=T} \Big| \cF_t\Big], \\
Y_{t}^{+,2}=\underset{\tau\ge t}\esssup\dbE\Big[\int_t^{\tau}\psi_2^+(s)ds + S_{\tau}^{+,2} \ind{\tau<T}+\xi_2^+\ind{\tau=T}\Big|\cF_t\Big],\\
Y_{t}^{-,1}=\underset{\sigma\ge t}\essinf\dbE\Big[\int_t^{\sigma}\psi_1^-(s)ds+ S_{\sigma}^{-,1}\ind{\sigma<T}+\xi_1^-\ind{\tau=T}\Big|\cF_t\Big],\\
Y_{t}^{-,2}=\underset{\sigma\ge t}\essinf\dbE\Big[\int_t^{\sigma}\psi_2^-(s)ds + S_{\sigma}^{-,2} \ind{\sigma<T}+\xi_2^-\ind{\tau=T}\Big|\cF_t\Big],
\end{array}\right.
\ee
where, the suprema and infima are taken over random times $\tau$ and $\sigma$ and where the obstacles are defined as
\be \left\{
\begin{array}{lll}
S_{t}^{+,1}= \left(Y_{t}^{+,2}-\ell_1(t)\right)\vee \left(Y_{t}^{-,1}-a_1(t)\right),\\
S_{t}^{+,2}= \left(Y_{t}^{+,1}-\ell_2(t)\right)\vee \left(Y_{t}^{-,2}-a_2(t)\right),\\
S_{t}^{-,1}= \left(Y_{t}^{-,2}+\ell_1(t)\right)\wedge \left(Y_{t}^{+,1}+b_1(t)\right),\\
S_{t}^{-,2}= \left(Y_{t}^{-,1}+\ell_2(t)\right)\wedge \left(Y_{t}^{+,2}+b_2(t)\right).
\end{array}\right.
\ee

In this paper we show existence of a solution of the system of Snell envelopes (\ref{SYS}) and also prove that the random times displayed in (\ref{tau1}), (\ref{sigma1}), (\ref{tau2}), and (\ref{sigma2}) are optimal, when the history $(\cF_t, \,\, 0\le t\le T)$ of the production is generated by the filtration of a given Brownian motion $B$.  Hence, using the relation between Snell envelopes and reflected BSDEs, this amounts to proving existence and continuity in time (needed to obtain optimality of the random times given above) of solutions of the following system of RBSDEs with interconnected obstacles: Find processes $\left(Y^{\pm,i},  Z^{\pm,i},K^{\pm,i}\right),\,\ i=1,2,$ in some appropriate spaces, such that
\be\label{S}
\left\{
\begin{array}{lll}
Y_{t}^{+,i} = \xi_i^+ + \ds{\int_{t}^{T}\psi^{+}_i(s)ds+(K_{T}^{+,i}-K_{t}^{+,i} ) - \int_{t}^{T}Z_{s}^{+,i}dB_{s}},\\
Y_{t}^{-,i}  = \xi^-_i+\ds{\int_{t}^{T}\psi_i^-(s) ds-(K_{T}^{-,i}-K_{t}^{-,i})- \int_{t}^{T}Z_{s}^{-,i}  dB_{s}}, \\
Y_{t}^{+,1}\ \ge (Y_{t}^{+,2}-\ell_1(t))\vee (Y_{t}^{-,1} - a_1(t)), \\
Y_{t}^{-,1}\ \le (Y_{t}^{-,2}+\ell_1(t))\wedge (Y_{t}^{+,1}+b_1(t)), \\
Y_{t}^{+,2}\ \ge (Y_{t}^{+,1}-\ell_2(t))\vee (Y_{t}^{-,2} - a_2(t)), \\
Y_{t}^{-,2}\ \le (Y_{t}^{-,1}+\ell_2(t))\wedge (Y_{t}^{+,2}+b_2(t)), \\
\int_0^T[Y_t^{+,1}-(Y_{t}^{+,2}-\ell_1(t))\vee (Y_{t}^{-,1} - a_1(t))]dK^{+,1}_t=0, \\
\int_0^T[(Y_{t}^{-,2}+\ell_1(t))\wedge(Y_{t}^{+,1}+b_1(t))-Y_t^{-,1}]dK^{-,1}_t=0, \\
\int_0^T[Y_t^{+,2}-(Y_{t}^{+,1}-\ell_2(t))\vee (Y_{t}^{-,2} - a_2(t))]dK^{+,2}_t=0, \\
\int_0^T[(Y_{t}^{-,1}+\ell_2(t))\wedge(Y_{t}^{+,2}+b_2(t))-Y_s^{-,2}]dK^{-,2}_t=0. 
\end{array} \right.
\ee

The main result of this paper is existence of a minimal solution
to the system (\ref{S}) when the drivers $\psi_i^{+}$ and $\psi_i^{-}$ take the form $\psi_i^{+}(t)=\psi_i^+(t,\omega, Y_t^{+,i},Z_t^{+,i})$ and $\psi_i^{-}(t)=\psi_i^-(t,\omega, Y_t^{-,i},Z_t^{-,i})$. A maximal solution may be proven to exist in an analogous way. Furthermore,
we provide a counter-example showing that uniqueness of the solutions does not
hold. The proof of the main result uses a monotone sequence of
processes approximating the system $\left(Y^{+,1},Y^{-,1},Y^{+,2},Y^{-,2}\right)$.
The main estimates used to derive convergence of this approximating sequence make use of the It\^o-Tanaka formula, which makes it difficult to extend the problem formulation to a multi-mode setting.
Extension to the multiple-mode case does not seem impossible, but technically very challenging. By
imposing some restrictions on some of the switching costs (they need
to be It\^o-processes), we obtain continuity in time of the solutions. If this
restrictions are not made, then a \cadlag solution can still be proven
to exist. However, the continuity is important to show optimality of the strategies $(\tau^{+,1}, \sigma^{-,1},\tau^{+,2},\sigma^{-,2})$ defined in (\ref{tau1}), (\ref{sigma1}), (\ref{tau2}) and (\ref{sigma2}) above.

When the drivers $\psi_i^{\pm}$ depend explicitly of a diffusion process with infinitesimal generator $\cA$, unique solutions of systems of RBSDEs are related to viscosity solutions of systems of variational inequalities (see \cite{ElKaroui97} for details). Unfortunately, we are unable to establish this relation due to lack of uniqueness of the solutions of the system of RBSDEs (\ref{S}).

An extension of this model which includes a "mean-field" type interaction, prepared several months after the present paper, has appeared in \cite{djehiche-hamdi14}, while this paper was still under review. To minimize eventual repetition, we will refer to  \cite{djehiche-hamdi14} whenever similar arguments and proof are called upon.

The paper is organized in the following way. Section 3 gives the necessary
notation and preliminaries. In Section 4, we introduce the system of
RBSDEs associated with the problem, state the main result and give a counter-example showing lack of
of uniqueness of the solutions of the system of RBSDEs. Finally, a proof of the main
result is displayed in Section 5.

\section{Notation and preliminaries}

Throughout this paper $(\Omega, {\cal F}, \dbP)$ will be a fixed
probability space on which is defined a standard $d$-dimensional
Brownian motion $B=(B_t)_{0\leq t\leq T}$ whose natural filtration
is $(\cF_t^0:=\sigma \{B_s, s\leq t\})_{0\leq t\leq T}$. Let
$\dbF=(\cF_t)_{0\leq t\leq T}$ be the completed filtration of
$(\cF_t^0)_{0\leq t\leq T}$ with the $\dbP$-null sets of ${\cal F}$.
Hence $\dbF$ satisfies the usual conditions, i.e. it is right
continuous and complete.

We introduce the following spaces of processes:
\begin{itemize}
\item[$\bullet$] ${\cal P}$ is the $\sigma$-algebra on $[0,T]\times \Omega$ of $\dbF$-progressively measurable processes;
\item[$\bullet$] ${\cM}^{d,2}$ is the set of $\cal P$-measurable and $\dbR^d$-valued processes $w=(w_t)_{t\leq T}$ such that
$$
\|w\|_{\cM^{d,2}}:=\dbE\left[\int_0^T|w_s|^2 ds\right]^{1/2}<\infty; 
$$

\item[$\bullet$] $\cS^2$ (resp. $\cS_c^2$) is the set of $\cal P$-measurable and \cadlag (resp. continuous), $\dbR$-valued processes ${w}=({w}_t)_{t\leq T}$ such that 

$$
\|w\|_{\cS^2}:=\dbE\left[\sup_{0\le t\leq T}|w_t|^2 \right]^{1/2} <\infty;
$$

\item[$\bullet$] ${\cK}^2$ (resp. ${\cK}_c^2$) is a subset of $\cS^2$ (resp. $\cS_c^2$) of nondecreasing \cadlag (resp. continuous) processes $(K_t)_{0\le t\le T}$ such that $K_0=0$. 
\end{itemize}


In the sequel we will frequently use the following results on reflected BSDEs which are by now well known. For
a proof, the reader is referred to e.g. \cite{ElKaroui97} for the continuous case and to \cite{Hamadene02} or \cite{LepeltierXu05} for the \cadlag case. A solution
of the reflected BSDE associated with a triple ($f, \xi ,S$), where
$f: (t,\omega, y, z) \mapsto f(t,\omega,y, z)$ ($\dbR$-valued) is the
generator, $\xi$ is the terminal condition and $S:=(S_t)_{t\leq T}$ is
the lower barrier, is a triple $(Y_t, Z_t,K_t)_{0\le t\le T}$ of
$\dbF$-adapted stochastic processes that satisfies:
\begin{equation} \label{rbsde}\left\{
\begin{array}{ll}
Y\in \cS^2,\,K\in \,\cK^2 \mbox{ and }Z\in \cM^{d,2}, \\ Y_{t} = \xi+
\int_{t}^{T}f(s,\omega,Y_s,Z_s)ds + (K_{T} - K_{t}) - \int_{t}^{T}
Z_{s}dB_{s}  ,
\\ Y_t\ge S_t, \;\;\; 0\le t\le T,\\
 \int_{0}^{T} (S_{t^-}-Y_{t^-})dK_t=0. \end{array} \right.
\end{equation}

The RBSDE($f, \xi ,S$) is said standard if the following conditions
are satisfied:
\begin{itemize}
\item[\bf{(H1)}] The generator $f$ is Lipschitz with respect to $(y,z)$ uniformly in $(t,\omega)$ ;

\item[\bf{(H2)}] The process $(f(t,\omega, 0,0))_{0\le t\leq T}$ is ${\dbF}$-progressively measurable and $dt\otimes d\P$-square integrable ;

\item[\bf{(H3)}] The random variable $\xi$ is in $L^{2}\left(\Omega,\cF_{T},
\dbP\right)$;

\item [\bf{(H4)}] The barrier $S$ is \cadlag, $\dbF$-adapted
and satisfies: $\dbE[\sup_{0\le s\le T}|S_{s}^{+}|^{2} ] < \infty$ and $S_{T}\le \xi$, $\dbP$-a.s.
\end{itemize}

\begin{theorem}\rm{(see \cite{Hamadene02} or \cite{LepeltierXu05})} \label{RBSDE} Let the coefficients $(f,\xi, S)$ satisfy assumptions (\rm{\bf{(H1)-(H4)}}.
Then the RBSDE (\ref{rbsde}) associated with $(f,\xi, S)$ has a
unique $\dbF$-progressively measurable solution ($Y, Z, K$) which
belongs to $\cS^{2}\times\cM^{d, 2}\times \cK^{ 2}$. Moreover the
process $Y$ enjoys the following representation property as a Snell
envelope: for all $t\leq T$,
\begin{equation} \label{snelly}
Y_t=\esssup_{\tau \geq t} \E \bigg[  \int_t^\tau f(s, Y_{s},
Z_{s}) ds + S_\tau \ind{[\tau <T]}+\xi \ind{[\tau=T]} \bigg| \cF_t \bigg].
\end{equation}
\end{theorem}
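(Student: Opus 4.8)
The plan is to establish the result in two stages: first construct the solution when the generator is frozen (independent of $(y,z)$) directly from the theory of Snell envelopes, and then reach a general Lipschitz generator by a contraction argument. For the first stage I would fix $f=f(t,\omega)$ square-integrable and set $U_t := \int_0^t f(s)\,ds + S_t \ind{t<T} + \xi\ind{t=T}$. By (H4) the process $U$ is \cadlag and $\cS^2$-dominated, so it admits a Snell envelope $\bar Y\in\cS^2$, the smallest \cadlag supermartingale majorising $U$. Because the filtration is Brownian, the Doob--Meyer decomposition of $\bar Y$ combined with the martingale representation theorem yields $\bar Y_t = \bar Y_0 + \int_0^t Z_s\,dB_s - K_t$ with $Z\in\cM^{d,2}$ and $K\in\cK^2$. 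Removing the drift $\int_0^t f\,ds$ returns a triple $(Y,Z,K)$ solving (\ref{rbsde}); the flatness condition $\int_0^T (S_{t^-}-Y_{t^-})\,dK_t=0$ is exactly the statement that the envelope leaves its obstacle off the contact set, and the optimal-stopping characterisation of $\bar Y$ is the representation (\ref{snelly}).

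Uniqueness and the passage to a solution-dependent generator both rest on one It\^o computation. Given two solutions, I would apply It\^o's formula to $e^{\beta t}|Y^1_t - Y^2_t|^2$: the stochastic-integral terms vanish in expectation, the generator difference is absorbed by (H1) and the factor $e^{\beta t}$ for $\beta$ large, and the decisive term $\int (Y^1_s - Y^2_s)\,d(K^1-K^2)_s$ is nonpositive. Indeed, on the support of $dK^1$ one has $Y^1=S\le Y^2$ while on the support of $dK^2$ one has $Y^2=S\le Y^1$, so both contributions carry the right sign. For fixed data this forces $Y^1\equiv Y^2$ and hence $Z^1=Z^2$, $K^1=K^2$. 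The same estimate shows that the map sending $(U,V)\in\cS^2\times\cM^{d,2}$ to the solution $(Y,Z)$ of the reflected equation with frozen generator $f(s,\omega,U_s,V_s)$ --- well defined by the first stage --- is a contraction on $\cS^2\times\cM^{d,2}$ once $\beta$ is chosen large enough; its unique fixed point is the solution of (\ref{rbsde}) for the genuine generator, and the Lipschitz property (H1) together with the a priori bound keeps the iterates in the required spaces.

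The main obstacle throughout is the control of the reflecting process $K$: a priori one has no estimate on its cross terms, and it is only the Skorokhod flatness condition together with the ordering $Y\ge S$ that renders $\int (Y^1-Y^2)\,d(K^1-K^2)$ nonpositive, the single sign on which both the uniqueness and the contraction hinge. A further technical point, pertinent to the \cadlag barrier case cited from \cite{Hamadene02, LepeltierXu05}, is that $K$ may inherit predictable jumps from the jumps of $S$; the flatness condition must then be handled in its left-limit form, and one must check that the $\cS^2$-bound on $Y$, and hence the integrability of $K$, survives --- this is precisely where the hypotheses $\dbE[\sup_{t}|S^+_t|^2]<\infty$ and $S_T\le\xi$ in (H4) are used.
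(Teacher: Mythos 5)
This theorem is stated in the paper without proof (it is the standard existence, uniqueness and Snell-envelope representation result for reflected BSDEs, cited from \cite{Hamadene02}, \cite{LepeltierXu05}, and \cite{ElKaroui97} for the continuous case), and your argument is essentially the proof given in those references: construct the solution for a frozen generator via the Snell envelope, Doob--Meyer decomposition and Brownian martingale representation, then pass to a Lipschitz generator by a $\beta$-weighted It\^o contraction whose decisive ingredient is the sign $\int (Y^1_{s-}-Y^2_{s-})\,d(K^1-K^2)_s \le 0$ coming from the Skorokhod flatness condition. Your treatment of the points specific to the \cadlag case (predictable jumps of $K$ inherited from $S$, the left-limit form of the flatness condition, and the role of (H4) in securing the $\cS^2$ and $L^2$ bounds) matches \cite{Hamadene02}; the only genuinely different route in the cited literature is the penalization scheme of \cite{LepeltierXu05}, which you did not need.
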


The proof of Theorem \ref{RBSDE} is related to the following, by
now standard, estimates and comparison results for RBSDEs. For the
proof see \cite{ElKaroui97} for the continuous case and \cite{Hamadene02} or \cite{LepeltierXu05} for the \cadlag case.

\begin{lemma}\label{aprioriestimates}
Let $(Y, Z, K)$ be a solution of the RBSDE $(f, \xi, S)$. Then there
exists a constant $C$ depending only on the time horizon $T$ and on
the Lipschitz constant of $f$ such that:
\begin{equation}\label{RBSDEbound}
\begin{split}
\dbE & \left( \sup_{0\le t\leq T}|Y_t|^2+ \int_{0}^T|Z_{s}|^2ds +|K_{T}|^2 \right) \\
& \le C\dbE\left(\int_{0}^{T}|f(s, 0, 0)|^{2}ds + |\xi|^2 + \sup_{0\le t\le T}|S_t^{+}|^{2}\right).
\end{split}
\end{equation}
\end{lemma}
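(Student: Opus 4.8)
The plan is to run the classical It\^o's formula argument for reflected BSDEs, taking care to handle the reflection term $dK$ and the interdependence between the bounds on $\int_0^T|Z_s|^2\,ds$ and $|K_T|^2$. First I would apply It\^o's formula to $|Y_s|^2$ between $t$ and $T$, using $dY_s=-f(s,Y_s,Z_s)\,ds-dK_s+Z_s\,dB_s$, to obtain
$$
|Y_t|^2+\int_t^T|Z_s|^2\,ds=|\xi|^2+2\int_t^T Y_sf(s,Y_s,Z_s)\,ds+2\int_t^T Y_{s^-}\,dK_s-2\int_t^T Y_sZ_s\,dB_s.
$$
Taking expectations removes the stochastic integral (a true martingale by the $\cS^2\times\cM^{d,2}$ integrability), and the Skorokhod condition $\int_0^T(S_{s^-}-Y_{s^-})\,dK_s=0$ lets me replace $Y_{s^-}$ by $S_{s^-}$ in the reflection term, so that $\int_t^T Y_{s^-}\,dK_s=\int_t^T S_{s^-}\,dK_s\le (\sup_{s}S_s^+)\,K_T$.

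Next I would estimate the driver term via the Lipschitz assumption \textbf{(H1)}, writing $|f(s,Y_s,Z_s)|\le|f(s,0,0)|+C_f(|Y_s|+|Z_s|)$, and apply Young's inequality to each resulting product. Choosing the Young parameters small enough lets me absorb a fraction, say $\tfrac12\int_t^T|Z_s|^2\,ds$, into the left-hand side, at the cost of a term $C\int_t^T|Y_s|^2\,ds$ together with the data terms $|\xi|^2+\int_0^T|f(s,0,0)|^2\,ds$. For the reflection term I would use $2(\sup_sS_s^+)\,K_T\le \tfrac1\eps(\sup_sS_s^+)^2+\eps K_T^2$.

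The main obstacle is that the $K_T^2$ appearing here must itself be controlled, and the natural bound for it reintroduces $\int_0^T|Z_s|^2\,ds$: from the equation, $K_T=Y_0-\xi-\int_0^T f(s,Y_s,Z_s)\,ds+\int_0^T Z_s\,dB_s$, so $\dbE|K_T|^2\le C\big(\dbE|Y_0|^2+\dbE|\xi|^2+\dbE\int_0^T|f(s,0,0)|^2\,ds+\dbE\int_0^T|Z_s|^2\,ds\big)$ via \textbf{(H1)} and the It\^o isometry. The two estimates must therefore be combined simultaneously, choosing $\eps$ small enough that both the $\eps K_T^2$ contribution and the residual $\int|Z|^2$ term can be absorbed on the left. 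After this absorption, Gronwall's inequality applied to $t\mapsto\dbE|Y_t|^2$ eliminates the $C\int_t^T|Y_s|^2\,ds$ term and yields the bounds on $\dbE\int_0^T|Z_s|^2\,ds$, on $\dbE|Y_t|^2$, and, through the displayed expression for $K_T$, on $\dbE|K_T|^2$.

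Finally, to upgrade $\dbE|Y_t|^2$ to $\dbE\sup_{t}|Y_t|^2$, I would return to the It\^o identity, keep the supremum over $t$ of the stochastic integral, and apply the Burkholder--Davis--Gundy inequality: $\dbE\sup_t\big|\int_t^T Y_sZ_s\,dB_s\big|\le C\,\dbE\big(\int_0^T|Y_s|^2|Z_s|^2\,ds\big)^{1/2}\le\tfrac14\dbE\sup_t|Y_t|^2+C\,\dbE\int_0^T|Z_s|^2\,ds$, absorbing the supremum on the left and invoking the already-established bound on $\dbE\int_0^T|Z_s|^2\,ds$. Collecting all constants, which depend only on $T$ and $C_f$, gives (\ref{RBSDEbound}).
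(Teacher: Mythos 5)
Your strategy is the right one, and it is in fact the argument of the references to which the paper delegates this lemma (the paper gives no proof of its own; it cites \cite{ElKaroui97} for the continuous case and \cite{Hamadene02}, \cite{LepeltierXu05} for the \cadlag case): It\^o on $|Y|^2$, the Skorokhod condition to replace $Y_{s^-}$ by $S_{s^-}$, Young's inequality, a bound on the increasing process extracted from the equation itself, absorption, Gronwall, then Burkholder--Davis--Gundy for the supremum. However, one step as you have written it does not close. You bound the reflection term by $\varepsilon K_T^2$ and then control $K_T$ from the equation over the \emph{whole} interval, $K_T=Y_0-\xi-\int_0^T f\,ds+\int_0^T Z\,dB$, which (after the Lipschitz bound, and note that your display is missing the unavoidable term $\dbE\int_0^T|Y_s|^2ds$) gives $\dbE|K_T|^2\le C_2\big(\dbE|Y_0|^2+\dbE|\xi|^2+\dbE\int_0^T|f(s,0,0)|^2ds+\dbE\int_0^T(|Y_s|^2+|Z_s|^2)ds\big)$. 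Substituting this into the It\^o estimate at a general time $t$ puts on the right-hand side the quantities $\varepsilon C_2\dbE|Y_0|^2$, $\varepsilon C_2\dbE\int_0^t|Z_s|^2ds$ and $\varepsilon C_2\dbE\int_0^T|Y_s|^2ds$, none of which can be absorbed into the left-hand side $\dbE|Y_t|^2+\tfrac12\dbE\int_t^T|Z_s|^2ds$ once $t>0$. Even after performing the absorption at $t=0$ only, what survives for general $t$ is an inequality of the form $\dbE|Y_t|^2\le D+C\int_0^T\dbE|Y_s|^2ds$ with the integral over the \emph{full} interval; backward Gronwall does not apply to this, and integrating it in $t$ yields a bound only when $CT<1$, i.e.\ for small horizon, whereas the lemma is claimed for arbitrary $T$.

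The repair is small and is exactly what the cited proofs do: localize the bound on the increasing process. Since $K_T-K_t=Y_t-\xi-\int_t^T f(s,Y_s,Z_s)\,ds+\int_t^T Z_s\,dB_s$, one has
\begin{equation*}
\dbE|K_T-K_t|^2\le C_2\Big(\dbE|Y_t|^2+\dbE|\xi|^2+\dbE\int_t^T|f(s,0,0)|^2ds+\dbE\int_t^T\big(|Y_s|^2+|Z_s|^2\big)ds\Big),
\end{equation*}
and your reflection term is in fact bounded by $\sup_s S_s^+\,(K_T-K_t)$ before you relaxed it to $\sup_s S_s^+\,K_T$; keeping $K_T-K_t$, Young's inequality now only injects quantities supported on $[t,T]$: the terms $\varepsilon C_2\dbE|Y_t|^2$ and $\varepsilon C_2\dbE\int_t^T|Z_s|^2ds$ are absorbed into the left for $\varepsilon$ small (uniformly in $t$), and the leftover $C\dbE\int_t^T|Y_s|^2ds$ is eliminated by backward Gronwall for arbitrary $T$. (Alternatively one can salvage your global formulation by running Gronwall \emph{first}, treating $\varepsilon\dbE K_T^2$ as a finite constant --- finite because $K\in\cK^2$ by the definition of a solution --- and only then choosing $\varepsilon$ to bootstrap the $K$ estimate; but that is a different order of operations from the ``absorb, then Gronwall'' you describe.) Your concluding steps are fine: the bound on $\dbE K_T^2$ then follows from the equation, and the BDG argument upgrades $\sup_t\dbE|Y_t|^2$ to $\dbE\sup_t|Y_t|^2$. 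One last remark: in the paper's \cadlag setting, It\^o's formula for $|Y|^2$ carries the additional term $\sum_{t<s\le T}(\Delta K_s)^2$ on the left-hand side; it is nonnegative, so all your inequalities remain valid, but it should be acknowledged since you state the identity as for continuous $Y$.
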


\begin{lemma} $($Comparison Theorem$)$\label{standardcomparison}
Assume that $(Y, Z, K)$ and  $(Y^{\prime}, Z^{\prime}, K^{\prime})$ are solutions
of the reflected BSDEs associated with ($f, \xi, S$) and
$(f^{\prime},\xi^{\prime}, S^{\prime})$ respectively, where only one of the two
generators $f$ or $f'$ is assumed to be Lipschitz continuous. If
\begin{itemize}
\item $\xi \le \xi^{\prime}$, $\dbP-\as$,
\item $f(t, y, z) \le f^{\prime}(t, y, z), \;d\P \otimes dt$-a.s. and for all ($y, z$),
\item $\dbP-\as$ for all $\quad 0\le t\leq T$, $\,\,\, S_{t} \le S^{\prime}_{t}$,
\end{itemize}
then
\begin{equation} 
\dbP-\as \quad\mbox{for all} \quad  0\le t\leq T, \quad Y_{t} \le
Y_{t}^{\prime}.
\end{equation}
\end{lemma}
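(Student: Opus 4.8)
The plan is to reduce the comparison to showing that the positive part of the difference vanishes, via an It\^o/Tanaka computation combined with a Gronwall argument. Write $\bar Y = Y - Y'$, $\bar Z = Z - Z'$ and $\bar K = K - K'$, so that
$$ d\bar Y_s = -\big[f(s,Y_s,Z_s) - f'(s,Y'_s,Z'_s)\big]\,ds - d\bar K_s + \bar Z_s\, dB_s, \qquad \bar Y_T = \xi - \xi' \le 0. $$
First I would apply It\^o's formula (in the continuous case) to the convex function $\phi(y) = [(y)^+]^2$ along $\bar Y$. Using $\phi(\bar Y_T)=0$ and that the stochastic integral $\int (\bar Y_s)^+\bar Z_s\,dB_s$ has zero expectation (a standard localization argument, since $Y,Y'\in\cS^2$ and $Z,Z'\in\cM^{d,2}$), this yields
$$ \dbE\,\phi(\bar Y_t) + \dbE\!\int_t^T \ind{\bar Y_s > 0}|\bar Z_s|^2\,ds = 2\,\dbE\!\int_t^T (\bar Y_s)^+\big[f(s,Y_s,Z_s) - f'(s,Y'_s,Z'_s)\big]ds + 2\,\dbE\!\int_t^T (\bar Y_s)^+\, d\bar K_s. $$
The goal is to bound the right-hand side by $C\,\dbE\int_t^T \phi(\bar Y_s)\,ds$ (after absorbing a $\bar Z$-term on the left) and conclude by Gronwall.

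The reflection term is where the RBSDE structure enters, and it is the point I expect to be most delicate. I would split $\int_t^T (\bar Y_s)^+ d\bar K_s = \int_t^T (\bar Y_s)^+ dK_s - \int_t^T (\bar Y_s)^+ dK'_s$. On the event $\{\bar Y_s > 0\}$ we have $Y_s > Y'_s \ge S'_s \ge S_s$, using the barrier ordering $S\le S'$ together with $Y'\ge S'$; hence $Y_s > S_s$ there, and by the flat-off condition $\int_0^T (S_s - Y_s)\,dK_s = 0$ the measure $dK_s$ charges only $\{Y_s = S_s\}$, so $\int_t^T (\bar Y_s)^+ dK_s = 0$. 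The remaining piece $-\int_t^T (\bar Y_s)^+ dK'_s \le 0$ since $(\bar Y_s)^+\ge 0$ and $K'$ is nondecreasing. Thus the whole reflection term is $\le 0$ and may simply be discarded.

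For the generator term I would exploit the hypothesis that \emph{one} generator is Lipschitz -- say $f'$ -- by inserting $f'(s,Y_s,Z_s)$:
$$ f(s,Y_s,Z_s) - f'(s,Y'_s,Z'_s) = \underbrace{\big[f(s,Y_s,Z_s) - f'(s,Y_s,Z_s)\big]}_{\le\, 0 \text{ by } f \le f'} + \big[f'(s,Y_s,Z_s) - f'(s,Y'_s,Z'_s)\big]. $$
On $\{\bar Y_s>0\}$ the first bracket is nonpositive, so it contributes $\le 0$; the second is controlled by $|f'(s,Y_s,Z_s)-f'(s,Y'_s,Z'_s)| \le C(|\bar Y_s| + |\bar Z_s|)$, and since $|\bar Y_s| = (\bar Y_s)^+$ there, Young's inequality gives $2(\bar Y_s)^+ C(|\bar Y_s|+|\bar Z_s|) \le C'\,\phi(\bar Y_s) + \tfrac12 \ind{\bar Y_s>0}|\bar Z_s|^2$. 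The $\bar Z$-part is absorbed into the left-hand side, leaving $\dbE\,\phi(\bar Y_t) \le C'\,\dbE\int_t^T \phi(\bar Y_s)\,ds$, whence Gronwall forces $\dbE\,\phi(\bar Y_t)=0$ for every $t$, i.e. $(\bar Y_t)^+ = 0$, which is the claim $Y_t \le Y'_t$ a.s. If instead $f$ is the Lipschitz generator, I would insert $f(s,Y'_s,Z'_s)$ and argue symmetrically.

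Finally, in the \cadlag case the same scheme applies, but the It\^o formula for $\phi(\bar Y)$ acquires jump contributions $\sum_{s\le T}\big[\phi(\bar Y_s)-\phi(\bar Y_{s^-})-\phi'(\bar Y_{s^-})\Delta\bar Y_s\big]$ coming from the jumps of $K$ and $K'$. There I would replace $Y_s,S_s$ by their left limits in the flat-off argument (using $\int_0^T(S_{s^-}-Y_{s^-})\,dK_s=0$) and use the convexity of $\phi$ together with the Meyer--It\^o/Tanaka formula to check that these jump terms carry the right sign, exactly as in the references cited above. The careful bookkeeping of the two interacting reflection processes against the barrier ordering, rather than any single estimate, is what I expect to be the crux of the argument.
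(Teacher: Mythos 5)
Your proof is correct and takes essentially the same route as the paper: the paper does not prove this lemma itself but defers to \cite{ElKaroui97} for the continuous case and to \cite{Hamadene02}, \cite{LepeltierXu05} for the c\`adl\`ag case, and your argument---It\^o--Tanaka applied to $[(Y-Y')^+]^2$, annihilating $\int_t^T (Y_s-Y'_s)^+\,dK_s$ via the flat-off condition together with the barrier ordering $Y_s>Y'_s\ge S'_s\ge S_s$, discarding $-\int_t^T (Y_s-Y'_s)^+\,dK'_s\le 0$, splitting the generator difference through whichever generator is Lipschitz, and closing with Young's inequality plus Gronwall---is precisely the classical proof given in those references. Your treatment of the c\`adl\`ag case (left limits in the flat-off condition and the favorable sign of the convexity jump corrections) likewise matches the cited arguments.
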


We finish this section by introducing a key ingredient in our problem, namely the notion of Snell envelope and some of its properties. We refer to \cite{CvitanicKaratzas95}, Appendix D in \cite{KaratzasShreve98} or \cite{Hamadene02} for further details.

In what follows we let $\mathcal{T}_{\theta}$ denote the class of $\mathbb{F}$-stopping times $\tau$ such that $\tau \geq \theta$, for some $\mathbb{F}$-stopping time $\theta$.

\begin{lemma}\label{thmsnell}
Let $U=(U_t)_{0\le t\leq T}$ be an $\dbF$-adapted $\dbR$-valued
\cadlag process that belongs to the class [D]; i.e.\@ the set of
random variables $\{U_\tau, \,\,\tau \in{\cal T}_0\}$ is uniformly
integrable. Then, there exists an $\dbF$-adapted $\dbR$-valued \cadlag
process $Z:=(Z_t)_{0\le t\leq T}$ such that $Z$ is the smallest
supermartingale which dominates $U$; i.e.\@ if
$(\bar{Z}_t)_{0\leq t\leq T}$ is another \cadlag supermartingale
of class [D] such that for all $0\leq t\leq T$, $\bar{Z}_t\geq
U_t$, then $\bar{Z}_t\geq Z_t$ for any $0\leq t\leq T$. The
process $Z$ is called the {\it Snell envelope\,} of $U$. Moreover,
it enjoys the following properties:

\begin{itemize}
\item[{\rm (i)}] For any $\dbF$-stopping time $\theta$ we have
\be \label{sun} Z_\theta=\underset{\tau \in {\cal
T}_{\theta}}{\esssup} \E [U_\tau|\cF_\theta]\,\,\,\,(\mbox{and then
}Z_T=U_T). \ee
\end{itemize}

\begin{itemize}

\item[{\rm (ii)}] The Doob-Meyer decomposition of $Z$ implies the existence of a martingale $(M_t)_{0\le t\leq T}$ and two nondecreasing processes $(A_t)_{0\le t\leq T}$ and $(B_t)_{0\le t\leq T}$ which are, respectively, continuous and purely discontinuous predictable such that for all $0\le t\leq T$,
$$ Z_t=M_t-A_t-B_t \quad (\mbox{with } A_0=B_0=0).$$
Moreover, for any $0\le t\leq T$, 
\be \label{jump}
\{\Delta B_t >0\}\subset \{\Delta U_t<0\}\cap \{Z_{t-}=U_{t-}\}.
\ee

\item[{\rm (iii)}] If $U$ has only positive jumps, then $Z$ is a continuous process. Furthermore, if $\theta$ is an $\dbF$-stopping time and
$\tau^*_\theta=\inf\{s\geq \theta, Z_s=U_s\}\wedge T$, then
$\tau^*_\theta$ is optimal after $\theta$, $i.e.$,
\begin{equation}\label{sdeux}
Z_\theta=\E [Z_{\tau^*_\theta}|\cF_\theta] = \E [U_{\tau^*_\theta}|\cF_\theta]= \underset{\tau \geq \theta}{\esssup} \E [U_\tau|\cF_\theta].
\end{equation}

\end{itemize}

\end{lemma}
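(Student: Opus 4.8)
The plan is to construct $Z$ explicitly from the essential-supremum formula and then read every stated property off that construction. For each $\dbF$-stopping time $\theta$ I would set $Z_\theta := \esssup_{\tau \in \mathcal{T}_\theta} \E[U_\tau \mid \cF_\theta]$. The first point to check is that this essential supremum is well behaved: the family $\{\E[U_\tau \mid \cF_\theta] : \tau \in \mathcal{T}_\theta\}$ is upward directed, since for $\tau_1, \tau_2 \in \mathcal{T}_\theta$ the set $A := \{\E[U_{\tau_1}\mid\cF_\theta] \ge \E[U_{\tau_2}\mid\cF_\theta]\}$ lies in $\cF_\theta$ and $\tau := \tau_1 \ind{A} + \tau_2 \ind{A^c}$ is again a stopping time in $\mathcal{T}_\theta$ dominating both. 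Hence the essential supremum is an increasing limit along a sequence $(\tau_n)$, and the class [D] assumption supplies the uniform integrability needed to pass expectations through the limit. Taking $\theta \equiv t$ constant yields the process $Z=(Z_t)_{0\le t \le T}$.

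Next I would establish that $Z$ is a supermartingale admitting a \cadlag modification, and that it is minimal. Optional sampling applied to the directed family gives $\E[Z_{\theta'} \mid \cF_\theta] \le Z_\theta$ for $\theta \le \theta'$, while the choice $\tau = \theta$ shows $Z \ge U$; right continuity of $\dbF$ makes $t \mapsto \E[Z_t]$ right continuous, which guarantees a \cadlag modification that I keep calling $Z$. For minimality, if $\bar Z$ is any \cadlag supermartingale of class [D] with $\bar Z \ge U$, then optional sampling yields $\bar Z_\theta \ge \E[\bar Z_\tau \mid \cF_\theta] \ge \E[U_\tau \mid \cF_\theta]$ for every $\tau \in \mathcal{T}_\theta$, and taking the essential supremum gives $\bar Z_\theta \ge Z_\theta$. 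This proves that $Z$ is the smallest dominating supermartingale and simultaneously establishes the representation (i).

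For (ii) I would invoke the Doob--Meyer theorem: since $Z$ is a \cadlag supermartingale of class [D], it decomposes as $Z = M - C$ with $M$ a uniformly integrable martingale and $C$ a predictable nondecreasing process with $C_0 = 0$; splitting $C$ into its continuous part $A$ and its purely discontinuous predictable part $B$ gives the stated form. I expect the jump inclusion $\{\Delta B_t > 0\} \subset \{\Delta U_t < 0\} \cap \{Z_{t-} = U_{t-}\}$ to be the main obstacle. The key mechanism is that $Z$ behaves as a martingale on any predictable stochastic interval where it stays strictly above $U$: on such a set $C$ cannot increase, for otherwise one could lower $Z$ while still dominating $U$, contradicting minimality. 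Making this rigorous via the optimal-stopping characterization together with the predictable section theorem shows that $C$, and in particular $B$, does not charge $\{Z_{t-} > U_{t-}\}$, forcing $Z_{t-} = U_{t-}$ at any jump of $B$; and since $\Delta Z_t = \Delta M_t - \Delta B_t$ with $\E[\Delta M_t \mid \cF_{t-}] = 0$, a strictly positive predictable jump $\Delta B_t$ must be compensated by a downward movement of $Z$, which at a time where $Z_{t-}=U_{t-}$ can only originate from a downward jump of $U$.

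Finally, (iii) would follow quickly from (ii). If $U$ has only positive jumps then $\{\Delta U_t < 0\} = \emptyset$, so $B \equiv 0$ and $Z = M - A$ is continuous. For the optimality of $\tau^*_\theta = \inf\{s \ge \theta : Z_s = U_s\} \wedge T$, I would note that $Z > U$ on $[\theta, \tau^*_\theta)$ while $Z_{\tau^*_\theta} = U_{\tau^*_\theta}$ by continuity; by the localization argument above the continuous increasing process $A$ does not increase on $[\theta, \tau^*_\theta]$, so the stopped process $(Z_{t \wedge \tau^*_\theta})_{t \ge \theta}$ is a martingale. Optional sampling then yields $Z_\theta = \E[Z_{\tau^*_\theta} \mid \cF_\theta] = \E[U_{\tau^*_\theta} \mid \cF_\theta]$, which combined with (i) gives exactly (\ref{sdeux}).
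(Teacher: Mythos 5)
Your construction is the standard one, and in fact it is the only proof available for comparison: the paper does not prove this lemma at all, but quotes it from the references it cites (Cvitani\'c--Karatzas, Appendix D of Karatzas--Shreve, Hamad\`ene), where precisely your route is taken --- upward-directedness of $\{\E[U_\tau\mid\cF_\theta]:\tau\in\mathcal{T}_\theta\}$, optional sampling, a \cadlag modification, minimality by comparison, Doob--Meyer. Two compressed steps are repairable along standard lines: right-continuity of $t\mapsto\E[Z_t]$ does not come from right-continuity of $\dbF$ (that only buys you the modification theorem), but from the identity $\E[Z_t]=\sup_{\tau\in\mathcal{T}_t}\E[U_\tau]$ (valid by directedness) together with right-continuity of $U$ and class [D]; and the ``martingale up to $\tau^*_\theta$'' property in (iii) is usually proved via the $\varepsilon$-optimal times $\inf\{s\ge\theta:\ Z_s\le U_s+\varepsilon\}$ rather than a section-theorem heuristic.

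The genuine gap is in (ii) and (iii): nowhere do you use that $\dbF$ is the Brownian filtration, and both statements are \emph{false} without it. Your mechanism ``$\E[\Delta M_t\mid\cF_{t-}]=0$, hence a strictly positive predictable jump $\Delta B_t$ must be compensated by a downward movement of $Z$'' only yields $\E[\Delta Z_t\mid\cF_{t-}]=-\Delta B_t<0$, a downward move \emph{in conditional mean}; pointwise, $\Delta Z_t=\Delta M_t-\Delta B_t$ can perfectly well be positive. Concretely: let $\epsilon=\pm1$ with probability $1/2$ each, let $\cF_t$ be trivial for $t<1$ and equal to $\sigma(\epsilon)$ for $t\ge1$, and let $U_t=0$ for $t<1$ and $U_t=\frac12\ind{\epsilon=1}-\ind{\epsilon=-1}$ for $t\ge1$. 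Then $Z_t=0$ for $t<1$, $Z_t=U_t$ for $t\ge1$, and the Doob--Meyer decomposition has $\Delta B_1=\frac14>0$ on all of $\Omega$ while $\Delta U_1=\frac12>0$ on $\{\epsilon=1\}$, so the inclusion (\ref{jump}) fails; similarly, taking $U_t=\ind{\epsilon=1}$ for $t\ge1$ (a process with only positive jumps) gives a Snell envelope that jumps at $t=1$, so (iii) fails. What rescues the lemma in the present paper is the martingale representation theorem: since $\dbF$ is the augmented Brownian filtration, the martingale $M$ in the Doob--Meyer decomposition is continuous. Then at any predictable time, pointwise $\Delta Z_t=-\Delta B_t<0$ on $\{\Delta B_t>0\}$; combined with your (correct) localization argument giving $Z_{t-}=U_{t-}$ there, one gets $\Delta U_t=U_t-Z_{t-}\le Z_t-Z_{t-}=\Delta Z_t<0$, which is exactly (\ref{jump}); and in (iii), $Z=M-A$ is continuous because $M$ is. You must invoke this continuity of Brownian martingales (equivalently, quasi-left-continuity of the filtration, which kills $\Delta M$ at predictable times); as written, the key steps of (ii) and (iii) do not go through.
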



\section{A related system of reflected BSDEs}

Consider the following system of reflected BSDEs, for $0\le t\le T$, 
\be\label{s}
\left\{
\begin{array}{lll}
\left(Y^{\pm,i},  Z^{\pm,i},K^{\pm,i}\right)\in \cS_c^2\times \cM^{d,2}\times\cK_c^2,\,\,\, i=1,2,\\
 Y_{t}^{+,i} = \xi_i^+ + \ds{\int_{t}^{T}\psi^{+}_i(s,\omega,Y^{+,i}_{s},Z^{+,i}_s)
ds+(K_{T}^{+,i}-K_{t}^{+,i} ) - \int_{t}^{T}Z_{s}^{+,i}dB_{s}},\\
Y_{t}^{-,i}  = \xi^-_i+\ds{\int_{t}^{T}\psi_i^-(s,\omega,Y_{s}^{-,i},Z_{s}^{-,i}) ds
-(K_{T}^{-,i}-K_{t}^{-,i})- \int_{t}^{T}Z_{s}^{-,i}  dB_{s}}, \\
Y_{t}^{+,1}\ \ge (Y_{t}^{+,2}-\ell_1(t))\vee (Y_{t}^{-,1} - a_1(t)), \\
Y_{t}^{-,1}\ \le (Y_{t}^{-,2}+\ell_1(t))\wedge (Y_{t}^{+,1}+b_1(t)), \\ 
Y_{t}^{+,2}\ \ge (Y_{t}^{+,1}-\ell_2(t))\vee (Y_{t}^{-,2} - a_2(t)), \\
Y_{t}^{-,2}\ \le (Y_{t}^{-,1}+\ell_2(t))\wedge (Y_{t}^{+,2}+b_2(t)), \\ 
\int_0^T[Y_t^{+,1}-(Y_{t}^{+,2}-\ell_1(t))\vee (Y_{t}^{-,1} - a_1(t))]dK^{+,1}_t=0, \\
\int_0^T[(Y_{t}^{-,2}+\ell_1(t))\wedge(Y_{t}^{+,1}+b_1(t))-Y_t^{-,1}]dK^{-,1}_t=0, \\
\int_0^T[Y_t^{+,2}-(Y_{t}^{+,1}-\ell_2(t))\vee (Y_{t}^{-,2} - a_2(t))]dK^{+,2}_t=0, \\
\int_0^T[(Y_{t}^{-,1}+\ell_2(t))\wedge(Y_{t}^{+,2}+b_2(t))-Y_s^{-,2}]dK^{-,2}_t=0. 
\end{array} \right.
\ee

\subsection*{Assumptions A}
\begin{itemize}
\item[(\bf{A1})] For each $i=,1,2$, $\psi_i^{\pm}$ are Lipschitz in $(y,z)$ uniformly in $(t,\omega)$ meaning that there exists
$C>0$ such that, for any $(t,\omega)\in [0,T]\times\Omega$,
$$
|\psi_i^{\pm}(t,\omega,y,z)-\psi_i^{\pm}(t,\omega,y',z')|\leq C(|y-y'|+|z-z'|).
$$ 
Moreover, the processes
$\psi_i^{\pm,0}(t):=\psi_i^{\pm}(t,\omega,0,0)$ are $\cF$-progressively measurable and
$dt\otimes d\dbP$-square integrable ;
\item[(\bf{A2})] The processes $(a_i(t,\omega))_{0\le t\leq T}$, $b_i(t, \omega))_{0\le t\leq
T}$ and $(\ell_i(t,\omega))_{0\le t\leq T}$, belong to $\cS_c^2$. Moreover, $\ell_i(t)> 0$ almost surely;

\item[(\bf{A3})] The random variables $\xi_i^{\pm}$ are
${\cF}_T$-measurable and square integrable. Furthermore, we assume
that $\dbP-\as$
\begin{equation}\label{BC}\left\{\begin{array}{lll} 
\xi_1^{+}\ \ge (\xi_2^{+}-\ell_1(T))\vee (\xi_1^{-} - a_1(T)),\\ 
\xi_2^{+}\ \ge (\xi_1^{+}-\ell_2(T))\vee (\xi_2^{-} - a_2(T)),\\
\xi_1^{-}\ \le (\xi_2^{-}+\ell_1(T))\wedge (\xi_1^{+} + b_1(T)),\\
\xi_2^{-}\ \le (\xi_1^{-}+\ell_2(T))\wedge (\xi_2^{+} + b_2(T)).
\end{array}\right.
\end{equation}

\item[\bf{(A4)}] The processes $(b_i(t))_{0\le t\leq T}$ and $(\ell_i(t,\omega))_{0\le t\leq T}$, are of It\^o type, i.e., for any $t\leq T$,
\begin{equation}
b_i(t) = b_i(0) + \ds{ \int_{0}^{t}U_i(s)ds+\int_{0}^{t} V_i(s)dB_{s}},
\end{equation}
and \begin{equation}
\ell_i(t) = \ell_i(0) + \ds{ \int_{0}^{t}\bar U_i(s)ds+\int_{0}^{t} \bar V_i(s)dB_{s}},
\end{equation}
for some $\dbF$-progressively measurable processes $(U,\bar U)$ and $(V,\bar V)$
which are $dt\otimes \dbP$-square
integrable.
\end{itemize}

\br 
\medskip\noindent
$(1)$ The set of solutions of system of inequalities (\ref{BC}) is nonempty. Indeed, $\xi_i^+=\xi_i^-=1, i=1,2$ satisfy (\ref{BC}), for 
$$
\ell_i(T)=e^{-4T} \quad\textrm{and} \quad  a_i(T)=b_i(T)=0, \quad i=1,2.
$$ 

\noindent
$(2)$ Noting that while $Y_{t}^{+,i}$ is a supermartingale, $Y_{t}^{-,i}$ is a submartingale, we need Assumptions {\bf(A4)} to prove the continuity of the increasing process $K^{-,i}$. This in turn is used to derive the continuity of $Y^{-,i}$  and then of the process $Y^{+,i}$. 
\er

We state the main result of the paper.

\bt \label{theo-s} Under Assumptions {\bf A}, the system of BSDEs (\ref{s}) admits a minimal solution $\left(Y^{\pm,i},  Z^{\pm,i},K^{\pm,i}\right)\in \cS_c^2\times \cM^{d,2}\times\cK_c^2,\,\,\, i=1,2,$ in the sense that if $\left(\bar Y^{\pm,i},  \bar Z^{\pm,i}, \bar K^{\pm,i}\right)\in \cS_c^2\times \cM^{d,2}\times\cK_c^2,\,\,\, i=1,2$ is another solution of the system (\ref{s}), then it holds that 
$$
\dbP-\as,\,\,\, \bar Y^{\pm,i}\ge Y^{\pm,i},\,\,\, i=1,2.
$$ 
\et

\br Under Assumptions {\bf A}, if in ({\bf A4}), instead of the process $(b_i(t))_{0\le t\leq T}$, we assume that the process $(a_i(t))_{0\le t\leq T}$ is of It\^o type, then it can be shown that the system of BSDEs (\ref{s}) admits a maximal solution.
\er

\br Besides its mathematical importance, the notion of minimal (resp. maximal) solution enjoys the following interpretation from a management point of view: When operating between two modes, $Y_{t}^{+,i}$ and $Y_{t}^{-,i}$ constitute the minimal (resp. maximal) expected profit and cost, i.e. the expected balance sheet, which constitute the basis for any decision to perform  eventual switching operations. Thus, the choice of the approximating schemes which lead to either minimal or maximal solutions is decisive. This interpretation is in line with the one related to the pricing of American options, where the Snell envelop corresponds to the minimal capital required to hedge the American option up to maturity.   
\er

\subsection{Non-uniqueness: a counter-example}

The following counter-example, shows that the system (\ref{s}) may not have a unique solution.  Assume 
$$
\ell(t):=\ell_i(t)=e^{-4t} \; \quad  \textrm{and} \quad  \; a_i(t)=b_i(t)=0, \quad i=1,2,\,\, 0\le t\leq T.
$$ 
and take $\xi_i^+=\xi_i^-=1$, and
\begin{equation}
\begin{split}
& \psi_1^{+}(t,\omega,y)=y, \;\;  \psi_2^{+}(t,\omega,y)=y+\ell(t),  \\
& \psi_1^{-}(t,\omega,y)=2y,\;\; \psi_2^{-}(t,\omega,y)=2y+\ell(t),\; 0\le t\leq T.
\end{split}
\end{equation}
Then, it is easily checked that the following 'deterministic' processes
$$
\begin{array}{lll}
& Y_{t}^{+,1}=Y_{t}^{-,1}=e^{T-t}, \quad Y_{t}^{+,2}=Y_{t}^{-,2}=e^{T-t}+\frac{1}{3}\left(e^{-4t}-e^{-3T-t}\right),\\
& Z_{t}^{+,i}= Z_{t}^{-,1}=0,\quad i=1,2,\\
& K_{t}^{+,1}=K_{t}^{+,2}=0, \\ & 
dK_{t}^{-,1}=e^{T-t}dt,\quad
dK_{t}^{-,2}= \big[e^{T-t}+\frac{1}{3}\left(e^{-4t}-e^{-3T-t}\right)\big]dt.
\end{array}
$$
and
$$
\begin{array}{lll}
& Y_{t}^{+,1}=Y_{t}^{-,1}=e^{2(T-t)}, \quad Y_{t}^{+,2}=Y_{t}^{-,2}=e^{2(T-t)}+\frac{1}{2}\left(e^{-4t}-e^{-2(T+t)}\right),\\
& Z_{t}^{+,i}= Z_{t}^{-,1}=0,\quad i=1,2,\\
& K_{t}^{-,1}=K_{t}^{-,2}=0, \\ & 
dK_{t}^{+,1}=e^{2(T-t)}dt,\quad 
dK_{t}^{+,2}= \big[e^{2(T-t)}+\frac{1}{2}\left(e^{-4t}-e^{-2(T+t)}\right)\big]dt
\end{array}
$$
are two different solutions of (\ref{s}). Therefore, uniqueness may not hold for the system (\ref{s}).

\section{Proof of Theorem \ref{theo-s}}

The proof is performed in several steps. First, we construct two increasing approximation
schemes $(Y^{+,i,n},Z^{+,i,n}, K^{+,i, n})$ and
$(Y^{-,i,n},Z^{-,i, n}, K^{-,i, n})$ with appropriate properties that allow them to converge to a limit $(Y^{+,i},Z^{+,i}, K^{+,i,})$ and
$(Y^{-,i},Z^{-,i}, K^{-,i})$ that solves (\ref{s}). Then, we show that the positive measures $dK_t^{-,i, n}$ associated with the increasing processes $K_t^{-,i, n}, \,\, 0 \le t\le T$ are absolutely continuous w.r.t. $dt$ with square integrable densities. This will yield the continuity of the limit processes $Y^{-,i}$ of $Y^{-,i,n}$ as $n\to\infty$. Finally, applying the Doob-Meyer decomposition (see Lemma \ref{thmsnell}) to the Snell envelop representation of $Y^{+,i}$, we establish their continuity. Furthermore, we show that the limit processes $(Y^{+,i},Z^{+,i}, K^{+,i,})$ and
$(Y^{-,i},Z^{-,i}, K^{-,i})$ constitute the minimal solution of (\ref{s}). We will only account for the construction of the approximating sequences and their properties, the needed estimates and the  absolute continuity of the increasing processes $dK^{-,i,n}$. The proofs of convergence to a continuous solution of (\ref{s}) and the minimality of the solution of (\ref{s}) are similar to the ones in the proof of Theorem 6 in \cite{djehiche-hamdi14}, we therefore omit them.

\subsubsection*{Step 1. Construction of the approximating sequences and their 
properties}

Let us introduce an increasing approximation
schemes $({Y}^{+,i,n},{Z}^{+,i,n}, {K}^{+,i, n})$,
$({Y}^{-,i,n},{Z}^{-,i, n}, {K}^{-,i, n})$ that we will show converge to the
minimal solution of (\ref{s}). Consider the following system of BSDEs defined recursively, for $i=1,2$, $n\geq 0$ and $0\le t\le T$, as follows:

Start with the following standard BSDE: 
\begin{equation}\label{bsden0}
\left\{\begin{array}{lll}
(Y^{+,i,0}, \; Z^{+,i,0})\in \cS_c^2\times \cM^{d,2},  \\
Y_{t}^{+,i,0} = \xi_i^{+}+\int_{t}^{T}\psi_{i}^+(s,Y_{s}^{+,i,0}, Z_{s}^{+,i,0})ds -\int_{t}^{T} Z_{s}^{+,i,0}dB_{s},
\end{array}\right.
\end{equation}
and denote 
\begin{equation}
\begin{split}
& L^i_t:=Y_{t}^{+,i,0}+b_i(t),\,\,\, Z^i_t:=Z_{t}^{+,i,0}+V_i(t), \\
& \psi_i(t,L^i_{t}, Z^i_{t}):=\psi_{i}^+(t,L_{t}^{i}-b_i(t), Z_{t}^{i}-V_i(t))-U_i(t).
\end{split}
\end{equation}
In view of ({\bf A1})-({\bf A4}),
the process $(L^i,Z^i)\in \cS_c^2\times \cM^{d,2}$ is the unique solution of the BSDE
\be\label{L}
L^i_t=L^i_T+\int_{t}^{T}\psi_i(s,L^i_{s}, Z^i_{s})ds-\int_t^T Z^i_sdB_s.
\ee
Let $(\dot Y,\dot Z)$ be the unique solution of the BSDE:
\be\label{dotY1}
\dot Y_t=\dot Y_T+\int_t^T \alpha(s,\dot Y_s,\dot Z_s)ds-\int_t^T \dot Z_s dB_s,
\ee
where,
\begin{equation}\label{dotY2}
\begin{split}
& \alpha(t,\omega,y,z):=(\psi_1\wedge\psi_2\wedge\psi_{1}^-\wedge\psi_2^-)(t,\omega,y,z), \\
& \dot Y_T:=(\xi_1^++b_1(T))\wedge (\xi_2^++b_2(T))\wedge \xi_1^-\wedge \xi_2^-.
\end{split}
\end{equation}
By (\ref{dotY2}), the Comparison Theorem yields that
$$
\dbP-\as \quad\mbox{for all} \quad  0\le t\leq T, \quad\dot Y_t\le L^i_t=Y_{t}^{+,i,0}+b_i(t),\quad i=1,2,
$$
Hence,
\be\label{dotY3}
\begin{split}
& \dbP-\as \quad\mbox{for all} \quad  0\le t\leq T, \\
& \dot Y_t\le (Y_{t}^{+,i,0}+b_i(t))\wedge(\dot Y_{t}+\ell_i(t)),\quad i=1,2,
\end{split}
\ee
since, $\ell_i(t)> 0$ almost surely.

Consider now, the processes 
\begin{equation}\label{bsden1}
\left\{\begin{array}{lll}
(Y^{\pm,i,1}, \; Z^{\pm,i,1}, K^{\pm,i,1})\in \cS_c^2\times \cM^{d,2}\times \cK_c^2,\\
Y_{t}^{-,i,1} =   \dot Y_T   +\int_{t}^{T}\psi_{i}^-(s,Y_{s}^{-,i,1}, Z_{s}^{-,i,1})ds \\
\quad \quad \quad \quad - (K_{T}^{-,i,1} - K_{t}^{-,i,1} )-\int_{t}^{T} Z_{s}^{-,i,1}dB_{s}, \\
 Y_t^{-,i,1}\le (Y_{t}^{+,i,0}+b_i(t))\wedge(\dot Y_{t}+\ell_i(t)),\\ \, 
\int_{0}^{T} [(Y_{t}^{+,i,0}+b_i(t))\wedge(\dot Y_{t}+\ell_i(t))-Y_{t}^{-,i,1}]dK^{-,i,1}_t=0\\
Y_{t}^{+,i, 1} = \xi_i^+ +\int_{t}^{T}\psi_{i}^+(s, Y_{s}^{+,i, 1 },Z_{s}^{+, i, 1 }) ds \\
\quad \quad \quad \quad \quad + K_{T}^{+, i, 1} - K_{t}^{+, i, 1} -\int_{t}^{T}Z_{s}^{+,i, 1}dB_{s}, \\
Y_{t}^{+,1,1} \ge (Y_t^{+,2,0}-\ell_1(t))\vee (Y_{t}^{-,1,1}- a_1(t)),\\ Y_{t}^{+,2,1} \ge (Y^{+,1,0}-\ell_2(t))\vee (Y_{t}^{-,2,1}- a_2(t)),\\
\, \int_0^T[Y_t^{+,1,1}-(Y_t^{+,2,0}-\ell_1(t))\wedge (Y_{t}^{-,1,1}-a_1(t))]dK^{+,1,1}_t=0,\\ 
\, \int_0^T[Y_t^{+,2,1}-(Y_t^{+,1,0}-\ell_2(t))\wedge (Y_{t}^{-,2,1}-a_2(t))]dK^{+,2,1}_t=0,
\end{array}\right.
\end{equation}

and, for $n\geq 1$ and any $t\leq T$, we consider the following system
\be \label{sn}\left\{
\begin{array}{lllll}
 Y_{t}^{-,i,n+1}  =  \xi_i^-+ \int_{t}^{T}\psi_{i}^-(s,Y_{s}^{-,i,n+1}, Z_{s}^{-,i,n+1})ds \\
\quad \quad \quad \quad \quad- (K_{T}^{-,i,n+1} - K_{t}^{-,i,n+1})-\int_{t}^{T} Z_{s}^{-,i,n+1}  dB_{s}, \\
 Y_{t}^{-,1,n+1} \le (Y_t^{-,2,n}+\ell_1(t))\wedge (Y_{t}^{+,1,n}+ b_1(t)),\\
Y_{t}^{-,2,n+1}\le (Y_t^{-,1,n}+\ell_2(t))\wedge (Y_{t}^{+,2,n}+ b_2(t)),\\ 
\, \int_0^T[(Y_t^{-,2,n}+\ell_1(t))\wedge (Y_{t}^{+,1,n}+ b_1(t))-Y_{t}^{-,1,n+1}]dK^{-,1,n+1}_t=0,\\ 
\, \int_0^T[(Y_t^{-,1,n}+\ell_2(t))\wedge (Y_{t}^{+,2,n}+ b_2(t))-Y_{t}^{-,2,n+1}]dK^{-,2,n+1}_t=0,\\
Y_{t}^{+,i, n+1} = \xi_i^+ +\int_{t}^{T}\psi_{i}^+(s, Y_{s}^{+,i, n+1 },Z_{s}^{+, i, n+1 }) ds \\
\quad \quad \quad \quad \quad + K_{T}^{+, i, n+1} - K_{t}^{+, i, n+1} -\int_{t}^{T}Z_{s}^{+,i, n+1}dB_{s}, \\
Y_{t}^{+,1,n+1} \ge (Y_t^{+,2,n}-\ell_1(t))\vee (Y_{t}^{-,1,n+1}- a_1(t)),\\ Y_{t}^{+,2,n+1} \ge (Y^{+,1,n}-\ell_2(t))\vee (Y_{t}^{-,2,n+1}- a_2(t)),\\
\, \int_0^T[Y_t^{+,1,n+1}-(Y_t^{+,2,n}-\ell_1(t))\wedge (Y_{t}^{-,1,n+1}-a_1(t))]dK^{+,1,n+1}_t=0,\\ 
\, \int_0^T[Y_t^{+,2,n+1}-(Y_t^{+,1,n}-\ell_2(t))\wedge (Y_{t}^{-,2,n+1}-a_2(t))]dK^{+,2,n+1}_t=0.
\end{array} \right.
\ee
The processes $(Y^{+,i,0}, Z^{+,i,0})$ being solution of the standard BSDE (\ref{bsden0}) and $(Y^{-,i,1}, Z^{-,i,1}, K^{-,i, 1})$ being a solution of a reflected BSDE (\ref{bsden1}), in view of Assumptions ({\bf A1})-({\bf A3}), these solutions exist and are unique.  It is easily shown by
induction that for any $n\geq 1$, the triples $(Y^{+,i,n}, Z^{+,i,n},K^{+, i,n})$ and $(Y^{-,i,n}, Z^{-,i,n}, K^{-,i,n})$ are well defined and
belong to the space $\cS_c^2 \times {\cM}^{d,2}\times \cK_c^2$. Moreover, by the Comparison Theorem (see Lemma \ref{standardcomparison}), we have $\dbP-\as$, for all $0\le t\leq T$,
$Y^{+,i,0}_t\leq Y_t^{+,i,1}$, since the process $K^{+,i,1}$ is increasing
and then $K^{+,i,1}_T-K^{+,i,1}_t\geq 0$ for all $t\leq T$. 
Moreover, by (\ref{dotY3}) and the Comparison Theorem,  we have
$$
\dbP-\as \quad\mbox{for all} \quad  0\le t\leq T, \quad\dot Y_t\le Y_t^{-,i,1},\quad i=1,2.
$$
But, this implies the obvious inequalities
$$
(\dot Y_t+\ell_1(t))\wedge (Y_{t}^{+,1,0}+ b_1(t))\le (Y^{-,2,1}_t+\ell_1(t))\wedge (Y_{t}^{+,1,1}+ b_1(t))
$$
and
$$
(\dot Y_t+\ell_2(t))\wedge (Y_{t}^{+,2,0}+ b_2(t))\le (Y^{-,1,1}_t+\ell_2(t))\wedge (Y_{t}^{+,2,1}+ b_2(t)),
$$
$\dbP-\as$, which, in view of the Comparison Theorem, yield that 
$$
\dbP-\as \quad\mbox{for all} \quad  0\le t\leq T, \quad Y_t^{-,i,1}\leq Y_t^{-,i,2}, \quad i=1,2.
$$  
Finally, a repeated use of the Comparison Theorem and an induction argument lead to the relation:
$$
\mbox{for all}\,\,  n\geq 0,\,\, \dbP-\as\,\,\mbox{for all}\quad t\leq T, \;
\; 
$$
$$
Y_t^{+,i,n} \le Y_t^{+,i, n+1} \;\;\;\textrm{and} \;\;\; Y_t^{-,i,n+1}
\le Y_t^{-,i, n+2}.
$$

\subsubsection*{Step 2. Estimates}

In this section we will establish the following uniform (in $n$) estimate.
 
\begin{lemma} There exists a positive constant $C$ such that, for all $n\ge 1$, 
\begin{equation}\label{Yn-estimate}
\begin{array}{ll}
\max_{i=1,2}\dbE\left(\sup_{0\le t\leq T}|Y^{i,n}_t|^2+ \int_{0}^T|Z^{i,n}_s|^2ds+(K^{i,n}_T)^2\right)\le C.\end{array}
\end{equation} 
\end{lemma}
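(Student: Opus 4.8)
The plan is to obtain the bound by applying the a priori RBSDE estimate of Lemma \ref{aprioriestimates} to each component and then closing the resulting system of inequalities, exploiting the fact that the barriers in the approximating scheme (\ref{sn}) are built from previously constructed components together with the fixed auxiliary processes $a_i,b_i,\ell_i,\dot Y$. Throughout I write $Y^{i,n}$ as shorthand for the quadruple $(Y^{+,i,n},Y^{-,i,n})$, and I use the same letter $C$ for a constant depending only on $T$ and the common Lipschitz constant of the $\psi_i^\pm$, changing from line to line.

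First I would fix $n\ge 1$ and $i\in\{1,2\}$ and apply Lemma \ref{aprioriestimates} to the reflected BSDEs defining $Y^{+,i,n+1}$ and $Y^{-,i,n+1}$ in (\ref{sn}). For the $+$ component the lower barrier is $S^{+,i,n}:=(Y_t^{+,j,n}-\ell_i(t))\vee(Y_{t}^{-,i,n+1}-a_i(t))$ (with $j\ne i$), and for the $-$ component the upper barrier, after the sign change turning the submartingale problem into a standard lower-reflected one, is controlled by $(Y_t^{-,j,n}+\ell_i(t))\wedge(Y_{t}^{+,i,n}+b_i(t))$. Estimate (\ref{RBSDEbound}) then gives
\begin{equation}
\dbE\Big(\sup_{t\le T}|Y^{\pm,i,n+1}_t|^2+\int_0^T|Z^{\pm,i,n+1}_s|^2ds+(K^{\pm,i,n+1}_T)^2\Big)\le C\,\dbE\Big(\int_0^T|\psi_i^{\pm,0}(s)|^2ds+|\xi_i^\pm|^2+\sup_{t\le T}|S_t^{\pm,i,n}|^2\Big),
\end{equation}
and since $a_i,b_i,\ell_i\in\cS_c^2$ and $\dot Y\in\cS_c^2$ by (\ref{dotY1}), the barrier suprema are bounded by $C$ plus the $\cS^2$-norms of the \emph{previous} iterates $Y^{+,j,n},Y^{-,i,n+1},Y^{-,j,n},Y^{+,i,n}$. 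The key point is that every barrier is dominated, up to the fixed data, by components at level $n$ (or by the $-$ component at level $n+1$, which feeds into the $+$ component at the same level).

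Next I would sum the four inequalities and set $\Phi_n:=\max_{i}\dbE\big(\sup_{t\le T}|Y^{i,n}_t|^2+\int_0^T|Z^{i,n}_s|^2ds+(K^{i,n}_T)^2\big)$. Collecting terms, the estimate takes the recursive form $\Phi_{n+1}\le C_0+C_1\Phi_n$ for constants $C_0,C_1$ independent of $n$; this by itself only yields a geometric bound $\Phi_n\le C_1^nC_0+\cdots$, which is not uniform. The main obstacle is therefore to convert this into an $n$-independent bound, and this is exactly where monotonicity must be used: by Step 1 the sequences $Y^{+,i,n}$ and $Y^{-,i,n}$ are monotone, with $Y^{+,i,n}$ increasing and $Y^{-,i,n}$ increasing as well, and both are squeezed between fixed processes. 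Specifically, each $Y^{+,i,n}$ lies above $Y^{+,i,0}$ (a fixed standard-BSDE solution) and below the solution of the non-reflected BSDE driven by the dominating generator; symmetrically $\dot Y_t\le Y^{-,i,n}_t$ bounds the $-$ components from below, and a fixed upper bound obtained by comparison with the BSDE whose terminal/running data majorize all barriers bounds them from above.

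Concretely, I would produce two fixed processes $\underline Y,\overline Y\in\cS_c^2$ (solutions of standard BSDEs with the $\min$- and $\max$-aggregated generators and terminal data) such that $\underline Y_t\le Y^{\pm,i,n}_t\le\overline Y_t$ for all $n$, $i$; this is the genuine content of the argument and follows from the Comparison Theorem applied exactly as in Step 1. With the $\cS^2$-bound on $\sup_t|Y^{\pm,i,n}_t|^2$ thus secured uniformly in $n$, I re-enter (\ref{RBSDEbound}): now the right-hand barrier terms are bounded by a \emph{fixed} constant $C$, so the left-hand side — including the martingale term $\int_0^T|Z^{\pm,i,n}|^2ds$ and the total variation $(K^{\pm,i,n}_T)^2$ — is bounded by $C$ independently of $n$. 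Taking the maximum over $i$ and over the signs gives (\ref{Yn-estimate}). The delicate step is the uniform two-sided $L^2$-sup bound on the $Y$-iterates; once that is in hand the $Z$ and $K$ bounds are an immediate consequence of the a priori estimate, and I expect no further difficulty.
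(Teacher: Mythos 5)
Your overall architecture is the paper's: you correctly note that iterating the a priori estimate (\ref{RBSDEbound}) only produces a geometric-in-$n$ bound, that the real content is a uniform two-sided $\cS^2$-squeeze of the iterates between \emph{fixed} processes obtained by comparison, and that once this squeeze is in hand, re-entering (\ref{RBSDEbound}) gives the $Z$- and $K$-bounds. Your lower bounds (monotonicity plus the fixed processes $Y^{+,i,0}$ and $\dot Y$, resp. $Y^{-,i,1}$) and your upper bound for the $-$ components (comparison with a standard BSDE whose data majorize, which is the paper's $\bar Y$ of (\ref{Y-bar}); this comparison is legitimate there because the $-$ equations are reflected from \emph{above}, so removing the nonincreasing term $-(K^{-,i,n}_T-K^{-,i,n}_t)$ pushes the solution up) coincide with the paper's Step 2.

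The gap is in your upper bound for the $+$ components. You take $\overline Y$ to be the solution of a \emph{standard} BSDE with max-aggregated generator and terminal data and assert that $Y^{+,i,n}\le\overline Y$ "follows from the Comparison Theorem applied exactly as in Step 1". It does not: the $+$ equations are reflected on a \emph{lower} barrier, so their solutions dominate the corresponding non-reflected solutions, and comparison of $Y^{+,i,n+1}$ with a standard BSDE solution $(\overline Y,\overline Z)$ is only available after one verifies that $\overline Y$ lies above the scheme's barrier, i.e. $\overline Y_t\ge (Y^{+,j,n}_t-\ell_i(t))\vee(Y^{-,i,n+1}_t-a_i(t))$, so that $(\overline Y,\overline Z,0)$ can be viewed as an RBSDE solution with a dominating barrier. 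The piece $Y^{+,j,n}_t-\ell_i(t)$ can be absorbed by induction because $\ell_i>0$, but the piece $Y^{-,i,n+1}_t-a_i(t)$ is only controlled by $\bar Y_t-a_i(t)$, and since $a_i$ is merely an element of $\cS_c^2$ (no It\^o structure, no sign condition), the process $\bar Y-a_i$ is not itself a BSDE solution and cannot be dominated by any choice of aggregated generator and terminal value through comparison of standard BSDEs. This is exactly why the paper does not use a standard BSDE at this point: it introduces the \emph{reflected} BSDE (\ref{Y-tilde}), whose solution $\tilde Y^i$ has lower barrier precisely $\bar Y_t-a_i(t)$ and hence dominates it by construction; combined with $\ell_i>0$, the bound (\ref{eq:dominationsecondecomp}) and induction, this yields (\ref{eq:dominationpremierecomp}), and applying (\ref{RBSDEbound}) to (\ref{Y-tilde}) gives the uniform estimate (\ref{Y-tilde-estimate}). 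So the construction must be ordered — first bound the $-$ components by $\bar Y$, then use $\bar Y-a_i$ as the barrier of an auxiliary \emph{reflected} equation bounding the $+$ components. With your $\overline Y$ replaced by this $\tilde Y^i$, the rest of your argument goes through as written.
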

\begin{proof}
Consider the following standard BSDE:
\begin{equation}\label{Y-bar}
\left\{\begin{array}{l}
\bar Y\in \cS_c^2 \mbox{ and }\bar Z\in {\cal M}^{d,2},\\
\bar Y_{t} =\sum_{i=1}^2|\xi_i^{-}|+ \int_{t}^{T}\sum_{i=1}^2|\psi^{-}_{i}(s,\bar Y_{s}, \bar
Z_{s})| ds - \int_{t}^{T} \bar Z_{s} dB_{s},\quad
t\leq T.\end{array}\right.
\end{equation}
The solution of this equation exists and the following estimate holds (see \cite{ElKaroui97}).
\begin{equation}\label{Y-bar-estimate}
\begin{split}
\dbE & \left(\sup_{0\le t\leq T}|\bar Y_t|^2+ \int_{0}^T|\bar Z_{s}|^2ds \right) \\
& \le C\dbE\left(\int_{0}^{T}\sum_{i=1}^2|\psi^{-}_i(s,0,0)|^{2}ds + \sum_{i=1}^2|\xi_i^{-}|^2 \right),
\end{split}
\end{equation}
where, the constant $C$ depending only on the time horizon $T$ and on
the Lipschitz constant of $\psi^{-}_i$. 

\medskip\noindent Furthermore, since, for $i=1,2$, the
process $K^{-,i,n}$ is non-decreasing, then, using the Comparison Theorem, we obtain
\begin{equation}\label{eq:dominationsecondecomp}
\P-a.s.,\quad\text{for all} \quad 0\le t\leq T, \,\,\mbox{and all}\,\,n, \,\, \quad Y^{-,i,n}_t\leq \bar Y_t,\quad i=1,2.  
\end{equation}
Finally, let
$(\tilde Y,\tilde Z,\tilde K)$ be the solution of the following
reflected BSDE defined, for
any $t\leq T$, as follows
\begin{equation}\label{Y-tilde}
\left\{\begin{array}{l}(\tilde Y^{i}, \tilde Z^{i}, \tilde K^{i})\in
\cS_c^2\times \cM^{d,2}\times \cK_c^2,\,\, i=1,2, \\ \tilde Y^{i}_{t}=
 \abs{ \xi^{+}_i } + \sum_{i=1}^2 \abs{ \xi_i^- } + \abs{a_i(T)}  \\
\quad \quad \quad+\int_{t}^{T}\psi^+_{i}(s, \tilde Y^i_{s}, \tilde Z^i_{s}) ds +
(\tilde K^i_{T} - \tilde K^i_{t} ) - \int_{t}^{T}
\tilde  Z^i_{s}dB_{s},\\
\tilde Y^i_{t} \ge  \bar Y_{t} - a_i(t), \\
\int_0^T (\tilde Y^i_s-(\bar Y_s-a_i(s))d\tilde
K^i_s=0,\end{array}\right.
\end{equation}
for which, in view of (\ref{RBSDEbound}) and (\ref{Y-bar-estimate}), there exists a constant $C$ depending only on the time horizon $T$ and on the Lipschitz constant of $(\psi^+_i, \psi^-_i), \,\, i=1,2,$ such that 
\begin{equation}\label{Y-tilde-estimate}
\begin{array}{ll}\dbE\left(\sup_{0\le t\leq T}|\tilde Y^i_t|^2 + \int_{0}^T|\tilde Z^i_{s}|^2ds+|\tilde K^i_{T}|^2\right) \le  CM,\end{array}
\end{equation} 
where,
$$
M:=\dbE\sum_{i=1}^2\left(|\xi^{\pm}_i|^2 + \int_{0}^{T}|\psi^{\pm}_i(s,0,0)|^{2}ds+\sup_{0\le t\le T}|a_i(t)|^2\right).
$$

\medskip\noindent Again, since, for each $0\le t\le T$ and $i=1,2$, $\ell_i(t)>0$, $\tilde Y^i_{t}\ge Y_t^{+,i,0}$ (by the Comparison Theorem) and obviously, $\tilde Y^i_{t} \ge (\tilde Y^i_{t}-\ell_i(t))\vee (\bar Y_{t} - a_i(t))$,  using the Comparison Theorem
and relying on (\ref{eq:dominationsecondecomp}) we have
\begin{equation}\label{eq:dominationpremierecomp}
\P-a.s.,\quad\text{for all} \quad 0\le t\leq T, \quad\text{for all}\,\,\, n,\,\, \quad Y^{+,i,n}_t\leq \tilde Y^i_t, \,\, i=1,2.
\end{equation}

\noindent In particular, by (\ref{eq:dominationsecondecomp}) and  (\ref{eq:dominationpremierecomp}), and using (\ref{Y-bar-estimate}) and (\ref{Y-tilde-estimate}) and the fact that, for $n\ge 1$, $Y^{+,i,0}_t\le Y^{+,i,n}_t$  and $Y^{-,i,1}_t\le Y^{-,i,n}_t$ a.s., it follows that the process  $Y^{i,n}:=(Y^{+,i,n},Y^{-,i,n})$ satisfies 
\begin{equation}\label{eq:domination_solution}
\widehat M:=\max_{i=1,2}\dbE\left[\sup_{n\geq 0}\sup_{0\le t\leq
T}|Y^{i,n}_t|^2\right]<\infty, \qquad i=1,2.  
\end{equation}

\medskip\noindent
Therefore, in view of (\ref{RBSDEbound}) and (\ref{eq:domination_solution}), it holds that, for all $n\ge 0$,
\begin{equation}
\begin{array}{ll}
\max_{i=1,2}\dbE\left(\sup_{0\le t\leq T}|Y^{i,n}_t|^2+ \int_{0}^T|Z^{i,n}_s|^2ds+(K^{i,n}_T)^2\right)\le  C(M+\widehat M).\end{array}
\end{equation} 
which is Estimate (\ref{Yn-estimate}).

\end{proof}

\medskip\noindent Let $Y^{+,i}$ and $Y^{-,i}$, $i=1,2$, be two optional processes obtained as $\dbP$-a.s. increasing limits of $Y^{+,i,n}$ and $ Y^{-,i,n}$:
$$
Y^{+,i}_t=\lim_{n\rightarrow \infty}Y^{+,i,n}_t \quad\mbox{and}\quad
Y^{-,i}_t=\lim_{n\rightarrow \infty}Y^{-,i,n}_t, \quad 0\le t\le T.
$$
In the next steps we will show that $Y^{+,i},Y^{-,i}, \,\, i=1,2$ are in fact continuous and solve the system (\ref{s}). Thanks to Assumption ({\bf A4}), we shall derive the continuity of $Y^{-,i}$, by proving that the associated  increasing process $K^{-,i}$ is continuous. This  will be done by showing that the measure associated with the continuous increasing process $K^{-,i,n}$ is absolutely continuous w.r.t. $dt$,  with a square integrable density. This is done in the next section. 

\subsubsection*{Step 3. Absolute continuity of the increasing processes $dK^{-,i,n}$}

We have the following
\begin{lemma} There exists a positive constant $C$ such that, for all $n\ge 1$,
\be\label{Kn}
\dbE\left[\int_0^T\left(\frac{dK_t^{-,i,n}}{dt}\right)^2 dt\right]\le C,\quad i=1,2,
\ee
\end{lemma}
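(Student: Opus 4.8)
The plan is to argue by induction on $n$, showing that $dK^{-,i,n}\ll dt$ with a density bounded in $\cM^{d,2}$ uniformly in $n$. For $n=1$ the upper barrier of the reflected BSDE (\ref{bsden1}) defining $Y^{-,i,1}$ is $(Y^{+,i,0}+b_i)\wedge(\dot Y+\ell_i)$, and since $Y^{+,i,0}$ and $\dot Y$ solve the \emph{standard} (non-reflected) BSDEs (\ref{bsden0}) and (\ref{dotY1}), each of $Y^{+,i,0}+b_i$ and $\dot Y+\ell_i$ is, thanks to Assumption {\bf(A4)} on $\ell_i,b_i$, an It\^o process with square-integrable drift and diffusion. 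For such an It\^o barrier the upper-barrier analogue of the classical representation of the reflecting process in \cite{ElKaroui97} yields absolute continuity of $K^{-,i,1}$ and the $L^2$ bound on its density; this is the base case.

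For the induction step I fix $n\ge 1$ and write the barrier of $Y^{-,i,n+1}$ as $S=A\wedge B$, with $A:=Y^{-,j,n}+\ell_i$ and $B:=Y^{+,i,n}+b_i$ ($j\neq i$). By the induction hypothesis $dK^{-,j,n}=k^{-,j,n}\,dt$ with $k^{-,j,n}\in\cM^{d,2}$, so $A$ is an It\^o process whose drift $\eta^A:=-\psi_j^-(\cdot,Y^{-,j,n},Z^{-,j,n})+\bar U_i+k^{-,j,n}$ lies in $\cM^{d,2}$ by {\bf(A1)},{\bf(A4)} and the hypothesis. The process $B$, however, carries the singular term $-dK^{+,i,n}$ coming from the profit-side reflection, on which we have no control at this stage; this is the main obstacle. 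The device to bypass it is to apply the It\^o--Tanaka formula to the two \emph{nonnegative} continuous semimartingales $\Delta:=A-Y^{-,i,n+1}\ge 0$ and $\Delta':=B-Y^{-,i,n+1}\ge 0$, the nonnegativity being exactly the barrier constraints. Since $\{ \Delta=0\}$ (a single level) is charged with zero mass by $d\langle \Delta\rangle$ via the occupation-times formula, the martingale part drops out on the zero set and Tanaka's formula gives, for the finite-variation part $V$ of each process, $\int_0^{\cdot}\ind{\Delta_s=0}\,dV_s=\tfrac12 L^0(\Delta)\ge 0$, and similarly for $\Delta'$.

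Applying this to $\Delta=A-Y^{-,i,n+1}$ yields $\ind{\Delta=0}\,dK^{-,i,n+1}=\ind{\Delta=0}(\eta^A+\psi_i^-)\,dt-\tfrac12\,dL^0(\Delta)$; since $dK^{-,i,n+1}\ge 0$ this forces $\tfrac12\,dL^0(\Delta)\le\ind{\Delta=0}(\eta^A+\psi_i^-)^+\,dt$, so $L^0(\Delta)$ is absolutely continuous and $\ind{\Delta=0}\,dK^{-,i,n+1}\ll dt$ with density bounded by $|\eta^A+\psi_i^-|$. Applying it to $\Delta'=B-Y^{-,i,n+1}$, the singular term reappears only through the \emph{sum}: $\ind{\Delta'=0}(dK^{+,i,n}+dK^{-,i,n+1})=\ind{\Delta'=0}(-\psi_i^++U_i+\psi_i^-)\,dt-\tfrac12\,dL^0(\Delta')$, and because both increasing processes are nonnegative this sum is dominated by $\ind{\Delta'=0}(-\psi_i^++U_i+\psi_i^-)^+\,dt$; hence $\ind{\Delta'=0}\,dK^{-,i,n+1}\ll dt$ with density bounded by $|-\psi_i^++U_i+\psi_i^-|$, with no separate knowledge of $K^{+,i,n}$ required. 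This is the crux: the local-time sign turns the uncontrolled profit-side reflection into a harmless absolutely continuous contribution.

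To conclude, the Skorokhod condition confines $dK^{-,i,n+1}$ to $\{Y^{-,i,n+1}=S\}\subseteq\{\Delta=0\}\cup\{\Delta'=0\}$, so the two estimates combine to give $dK^{-,i,n+1}\ll dt$ with $\tfrac{dK^{-,i,n+1}}{dt}\le(\eta^A+\psi_i^-)^++(-\psi_i^++U_i+\psi_i^-)^+$. Squaring and taking expectations then yields (\ref{Kn}): the Lipschitz bound {\bf(A1)} together with the $n$-uniform a priori estimate (\ref{Yn-estimate}) controls $\psi_i^\pm(\cdot,Y^{\pm,i,n},Z^{\pm,i,n})$ in $\cM^{d,2}$, Assumption {\bf(A4)} controls $U_i,\bar U_i$, and the induction bound controls $k^{-,j,n}$, all independently of $n$. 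The delicate point on which the whole scheme rests is the nonnegative–semimartingale/local-time step, and it is exactly this reliance on the It\^o--Tanaka formula that obstructs a direct extension to more than two modes.
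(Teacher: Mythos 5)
Your proof is correct and rests on the same two pillars as the paper's: induction on $n$, and an It\^o--Tanaka/sign argument that neutralizes the singular terms one cannot control. The execution, however, differs in a genuine way. The paper decomposes the obstacle itself, writing $\cO^n=B-(B-A)^+$ with $A=Y^{-,j,n}+\ell_i$, $B=Y^{+,i,n}+b_i$, and applies It\^o--Tanaka to $(B-A)^+$; this exhibits $\cO^n$ as an It\^o process with coefficients $(f_n,g_n)$ minus two \emph{decreasing} singular terms (the local time $\tfrac12\cL^n$ and $\ind{B\le A}dK^{+,i,n}$), and the bound $0\le dK^{-,i,n+1}\le(|\psi_i^-|+|f_n|)\,dt$ then follows from the upper-barrier analogue of Proposition 4.2 in \cite{ElKaroui97}, for which decreasing singular parts of the barrier are harmless. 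You never decompose the obstacle: you run Tanaka directly on the two nonnegative gaps $\Delta=A-Y^{-,i,n+1}$ and $\Delta'=B-Y^{-,i,n+1}$, observe that on $\{\Delta'=0\}$ only the \emph{sum} $dK^{+,i,n}+dK^{-,i,n+1}$ is identified and drop $dK^{+,i,n}$ by positivity, and then patch the two regional bounds through the Skorokhod condition via $\{Y^{-,i,n+1}=\cO^n\}\subseteq\{\Delta=0\}\cup\{\Delta'=0\}$. Your route is self-contained (it reproves the \cite{ElKaroui97} mechanism inline, adapted to the two-branch barrier) and makes transparent exactly why the uncontrolled $K^{+,i,n}$ is harmless; the paper's route is shorter because it can cite the known proposition, and its explicit pair $(f_n,g_n)$ is what feeds the estimate (\ref{f-g-i}). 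Two caveats, neither fatal. First, your base case calls $(Y^{+,i,0}+b_i)\wedge(\dot Y+\ell_i)$ ``an It\^o barrier''; it is not --- a minimum of It\^o processes carries a negative local time --- and the paper treats it by the same Tanaka decomposition it uses in the induction step; your own step-$n$ machinery with $dK^{+,i,n}\equiv0$ covers it, but as phrased the base case is imprecise. Second, in both your argument and the paper's, the step-$(n+1)$ density bound contains the step-$n$ density ($k^{-,j,n}$ inside $\eta^A$, resp.\ $dK_t^{-,2,n}/dt$ inside $f_n$), so the $L^2$ constants a priori compound linearly in $n$; the uniformity in $n$ claimed in the lemma is asserted rather than derived in both texts, a loose end you share with, but did not add to, the paper.
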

\begin{proof} We first establish absolute continuity of $dK^{-,i,1}_t$ w.r.t $dt$, for each $i=1,2$. This together with an induction argument will then yield absolute continuity of $dK^{-,i,n}_t$ w.r.t. $dt$, for every $n\ge 1$.

\noindent
Let
$$
\cO^i_t:=L^i_t\wedge(\dot Y_{t}+\ell_i(t))=L^i_t-(L^i_t-\dot Y_{t}-\ell_i(t))^+, \,\,\, i=1,2.
$$
In view of ({\bf A4}) and the It\^o-Tanaka formula, we get
\be\label{oi}
\cO^i_t=\cO^i_0+\int_0^t f_i(s)ds+\int_0^t g_i(s)dB_s-\frac{1}{2}\cL^i_t,
\ee
where, $\cL^i$ is the local time at zero of the continuous semimartingale $L^i-\dot Y-\ell_i$, 
 $$
f_i(t):=-\psi_i(t, L^i_t,Z^i_t)+\ind{L^i_t>\dot Y_t+\ell_i(t)}\left(\psi_i(t, L^i_t,Z^i_t)-\alpha(t,\dot Y_t,\dot Z_t)+\bar U_i(t)\right)
$$
and
$$
g_i(t):=Z^i_t-\ind{L^i_t>\dot Y_t+\ell_i(t)}\left(Z^i_t-\dot Z_t-\bar V_i(t)\right),
$$
where, in view ({\bf A1})-({\bf A4}) and (\ref{Yn-estimate}), there exists a constant $C>0$ such that
\be\label{f-g-1}
\dbE\left[\int_0^T\left(|\psi_i^-(t,Y_t^{-,i,1},Z_t^{-,i,1})|^2+|f_i(t)|^2+|g_i(t)|^2\right)dt\right]\le C,  \quad\,\,\, i=1,2.
\ee
Following the proof of Proposition 4.2 in (\cite{ElKaroui97}), we obtain
\be\label{absolute0}
0\le dK_t^{-,i,1}\le \left(|\psi_i^-(t,Y^{-,i,1},Z^{-,i,1})|+|f_i(t)|\right)dt, \quad i=1,2,
\ee
which together with (\ref{f-g-1}) yield that there exists a constant $C>0$ such that
\be\label{K0}
\dbE\left[\int_0^T\left(\frac{dK_t^{-,i,1}}{dt}\right)^2 dt\right]\le C,\quad i=1,2.
\ee

\noindent
By induction, assuming $K_t^{-,i,n}$ satisfies the estimate (\ref{K0}), we will show that $K_t^{-,i,n+1}$ satisfies a similar estimate. We will only consider the case $i=1$, as the other case follows in a similar fashion.

Consider the obstacle process
\begin{equation*}
\begin{split}
\cO^n_t  := & (Y_t^{-,2,n}+\ell_1(t))\wedge (Y_{t}^{+,1,n}+ b_1(t)) \\
 	= & Y_t^{+,1,n}+b_1(t)-(Y_t^{+,1,n}+b_1(t)-Y_t^{-,2,n}-\ell_1(t))^+.
\end{split}
\end{equation*}
By ({\bf A4}) and the It\^o-Tanaka formula, we get
\begin{equation*}\label{oi}
\begin{aligned}
\cO^n_t &=\cO^n_0+\int_0^t f_n(s)ds+\int_0^t g_n(s)dB_s \\
& \quad \quad -\frac{1}{2}d\cL^n_t - \ind{ \{ Y_t^{+,1,n}+b_1(t)  \leq Y_t^{-,2,n} + \ell_1(t) \} } dK_t^{+,1,n},
\end{aligned}
\end{equation*}
where, $\cL^n$ is the local time at zero of the continuous semimartingale $Y^{+,1,n}+ b_1-Y^{-,2,n}-\ell_1$,

\begin{equation*}
\begin{split}
f_n(t) := & U_1(t)-\psi_1^+(t,Y_{t}^{+,1,n},Z_{t}^{+,1,n}) \\
		& -\ind{ Y^{+,1,n}(t)+ b_1(t)>Y^{-,2,n}(t)+\ell_1(t)}\bigg(U_1(t)-\psi_1^+(t,Y_{t}^{+,1,n},Z_{t}^{+,1,n}) \\ 
		& + \psi_2^-(Y_{t}^{-,2,n},Z_{t}^{-,2,n})-\bar U_1(t)-\frac{dK_t^{-,2,n}}{dt}\bigg),
\end{split}
\end{equation*}

and

\begin{equation*}
\begin{split}
g_n(t):= & Z_{t}^{+,1,n}+V_1(t) \\
		& -\ind{ Y^{+,1,n}(t)+ b_1(t)>Y^{-,2,n}(t)+\ell_1(t)}\big( Z_{t}^{+,1,n}+V_1(t)-Z_{t}^{-,2,n}-\bar V_1(t)\big),
\end{split}
\end{equation*}

where, in view ({\bf A1})-({\bf A4}), (\ref{Yn-estimate}) and that $K_t^{-,i,n}$ satisfies (\ref{K0}), there exists a constant $C>0$ independent of $n$ such that 
\be\label{f-g-i}
\dbE\left[\int_0^T\left(|\psi_i^-(t,Y^{-,1,n+1},Z^{-,1,n+1})|^2+|f_n(t)|^2+|g_n(t)|^2\right)dt\right]\le C.
\ee
Again, following the proof of Proposition 4.2 in (\cite{ElKaroui97}), we obtain
\be\label{absolute0}
0\le dK_t^{-,1,n+1}\le \left(|\psi_i^-(t,Y^{-,1,n+1},Z^{-,1,n+1})|+|f_n(t)|\right)dt, 
\ee
which together with (\ref{f-g-i}) yield  that there exists a constant $C>0$ such that
$$
\dbE\left[\int_0^T\left(\frac{dK_t^{-,1,n+1}}{dt}\right)^2 dt\right]\le C.
$$
\end{proof}

\phantom{\cite{CvitanicKaratzas95}\cite{DjehicheHamadene09}\cite{DjehicheHamadenePopier09}
\cite{DjehicheHamadeneMorlais11}\cite{Dellacherie75fr}\cite{DixitPindyck94}\cite{ElKaroui97}
\cite{HamadeneJeanblanc07}\cite{Hamadene02}\cite{HamadeneZhang10}\cite{Hutang1}\cite{KaratzasShreve98}
\cite{LepeltierXu05}\cite{Peng99}\cite{PengXu05}\cite{RevuzYor91}\cite{Trigeorgis1996}}

\bibliographystyle{acm}
\bibliography{references}

\begin{thebibliography}{10}

\bibitem{djehshah}
{\sc Arnarson, T., Djehiche, B., Poghosyan, M., and Shahgholian, H.}
\newblock A pde approach to regularity of solutions to finite horizon optimal
  switching problems.
\newblock {\em Nonlinear Analysis: Theory, Methods \& Applications 71}, 12
  (2009), 6054--6067.

\bibitem{carmonaludkovski}
{\sc Carmona, R., and Ludkovski, M.}
\newblock Valuation of energy storage: An optimal switching approach.
\newblock {\em Quantitative Finance 10}, 4 (2010), 359--374.

\bibitem{cek2}
{\sc Chassagneux, J.-F., Elie, R., Kharroubi, I., et~al.}
\newblock A note on existence and uniqueness for solutions of multidimensional
  reflected bsdes.
\newblock {\em Electronic Communications in Probability 16\/} (2011), 120--128.

\bibitem{cek1}
{\sc Chassagneux, J.-F., Elie, R., Kharroubi, I., et~al.}
\newblock Discrete-time approximation of multidimensional bsdes with oblique
  reflections.
\newblock {\em The Annals of Applied Probability 22}, 3 (2012), 971--1007.

\bibitem{CvitanicKaratzas95}
{\sc Cvitanic, J., and Karatzas, I.}
\newblock {Backward stochastic differential equations with reflection and
  Dynkin games}.
\newblock {\em Ann. Probab. 24}, 4 (1995), 2024--2056.

\bibitem{Dellacherie75fr}
{\sc Dellacherie, C., and Meyer, P.}
\newblock {\em Probabilit\'es and potentiels, Chapter I-IV}.
\newblock Hermann, Paris, 1975.

\bibitem{DixitPindyck94}
{\sc Dixit, A.~K., and Pindyck, R.~S.}
\newblock {\em Investment under Uncertainty}.
\newblock Princeton University Press, New Jersey, 1994.

\bibitem{DjehicheHamadene09}
{\sc Djehiche, B., and Hamad\`ene, S.}
\newblock {On a finite horizon starting and stopping problem with risk of
  abandonment}.
\newblock {\em Int. J. Theor. Appl. Finance 12}, 4 (2009), 523--543.

\bibitem{DjehicheHamadeneMorlais11}
{\sc Djehiche, B., Hamad\`ene, S., and Morlais, M.~A.}
\newblock {Optimal stopping of expected profit and cost yields in an investment
  under uncertainty}.
\newblock {\em Stochastics 83}, 4-6 (2011), 431--448.

\bibitem{DjehicheHamadenePopier09}
{\sc Djehiche, B., Hamad\`ene, S., and Popier, A.}
\newblock {A Finite Horizon Optimal Multiple Switching Problem}.
\newblock {\em SIAM J. Control Optim. 48}, 4 (2009), 2751--2770.

\bibitem{djehiche-hamdi14}
{\sc Djehiche, B., and Hamdi, A.}
\newblock A two-mode mean-field optimal switching problem for the full balance
  sheet.
\newblock {\em International Journal of Stochastic Analysis 2014\/} (2014).

\bibitem{Hamadene-elasri}
{\sc El~Asri, B., and Hamadene, S.}
\newblock The finite horizon optimal multi-modes switching problem: the
  viscosity solution approach.
\newblock {\em Applied Mathematics and Optimization 60}, 2 (2009), 213--235.

\bibitem{ElKaroui97}
{\sc {El Karoui}, N., Kapoudjan, C., Pardoux, E., Peng, S., and Quenez, M.-C.}
\newblock {Reflected solutions of backward SDE and related problems for PDE's}.
\newblock {\em Ann. Probab. 25}, 2 (1997), 702--737.

\bibitem{ek3}
{\sc Elie, R., and Kharroubi, I.}
\newblock Probabilistic representation and approximation for coupled systems of
  variational inequalities.
\newblock {\em Statistics \& probability letters 80}, 17 (2010), 1388--1396.

\bibitem{Hamadene02}
{\sc Hamad{\`e}ne, S.}
\newblock Reflected bsde's with discontinuous barrier and application.
\newblock {\em Stochastics: An International Journal of Probability and
  Stochastic Processes 74}, 3-4 (2002), 571--596.

\bibitem{HamadeneJeanblanc07}
{\sc Hamad\`ene, S., and Jeanblanc, M.}
\newblock {On the Starting and Stopping Problem: Application in Reversible
  investments}.
\newblock {\em Math. Oper. Res. 32}, 1 (2007), 182--192.

\bibitem{Hammorlais13}
{\sc Hamad{\`e}ne, S., and Morlais, M.}
\newblock Viscosity solutions of systems of pdes with interconnected obstacles
  and switching problem.
\newblock {\em Applied Mathematics \& Optimization 67}, 2 (2013), 163--196.

\bibitem{HamadeneZhang10}
{\sc Hamad\`ene, S., and Zhang, J.}
\newblock {Switching problems and related systems of reflected backward SDEs}.
\newblock {\em Stochastic Process. Appl. 74\/} (2010), 571--596.

\bibitem{Hutang1}
{\sc Hu, Y., and Tang, S.}
\newblock Switching game of backward stochastic differential equations and
  associated system of obliquely reflected backward stochastic differential
  equations.
\newblock {\em arXiv preprint arXiv:0806.2058\/} (2008).

\bibitem{Hutang}
{\sc Hu, Y., and Tang, S.}
\newblock Multi-dimensional bsde with oblique reflection and optimal switching.
\newblock {\em Probability Theory and Related Fields 147}, 1-2 (2010), 89--121.

\bibitem{zervos1}
{\sc Johnson, T.~C., and Zervos, M.}
\newblock The explicit solution to a sequential switching problem with
  non-smooth data.
\newblock {\em Stochastics An International Journal of Probability and
  Stochastics Processes 82}, 1 (2010), 69--109.

\bibitem{KaratzasShreve98}
{\sc Karatzas, I., and Shreve, S.~E.}
\newblock {\em Methods of Mathematical Finance}.
\newblock Springer, New York, 1998.

\bibitem{LepeltierXu05}
{\sc Lepeltier, J.~P., and Xu, M.}
\newblock {Penalization method for reflected backward stochastic differential
  equations with one r.c.l.l. barrier}.
\newblock {\em Statist. Probab. Lett. 75\/} (2005), 58--66.

\bibitem{marcus1}
{\sc Li, K., Nystr{\"o}m, K., and Olofsson, M.}
\newblock Optimal switching problems under partial information.
\newblock {\em arXiv preprint arXiv:1403.1795\/} (2014).

\bibitem{ludkovski}
{\sc Ludkovski, M.}
\newblock Stochastic switching games and duopolistic competition in emissions
  markets.
\newblock {\em SIAM Journal on Financial Mathematics 2}, 1 (2011), 488--511.

\bibitem{marcus3}
{\sc Lundstr{\"o}m, N.~L., Nystr{\"o}m, K., and Olofsson, M.}
\newblock Systems of variational inequalities for non-local operators related
  to optimal switching problems: existence and uniqueness.
\newblock {\em Manuscripta Mathematica\/} (2013), 1--26.

\bibitem{marcus2}
{\sc Lundstr{\"o}m, N.~L., Nystr{\"o}m, K., and Olofsson, M.}
\newblock Systems of variational inequalities in the context of optimal
  switching problems and operators of kolmogorov type.
\newblock {\em Annali di Matematica Pura ed Applicata\/} (2013), 1--35.

\bibitem{Peng99}
{\sc Peng, S.}
\newblock {Monotonic limit theorem of BSDE and nonlinear decomposition theorem
  of Doob-Meyer's type}.
\newblock {\em Probab. Theory Related Fields 113\/} (1999), 473--499.

\bibitem{PengXu05}
{\sc Peng, S., and Xu, M.}
\newblock {The smallest $g$-supermartingale and reflected BSDE with single and
  double $L^2$ obstacles}.
\newblock {\em Ann. Inst. Henri Poincar\'e Probab. Stat. 41}, 3 (2005),
  605--630.

\bibitem{magnus}
{\sc Perninge, M., and S{\"o}der, L.}
\newblock Irreversible investments with delayed reaction: an application to
  generation re-dispatch in power system operation.
\newblock {\em Mathematical Methods of Operations Research 79}, 2 (2014),
  195--224.

\bibitem{pham1}
{\sc Pham, H., Vath, V.~L., and Zhou, X.~Y.}
\newblock Optimal switching over multiple regimes.
\newblock {\em SIAM Journal on Control and Optimization 48}, 4 (2009),
  2217--2253.

\bibitem{RevuzYor91}
{\sc Revuz, D., and Yor, M.}
\newblock {\em Continuous Martingales and Brownian Motion}.
\newblock Springer, New York, 2005.

\bibitem{Trigeorgis1996}
{\sc Trigeorgis, L.}
\newblock {\em Real options: managerial flexibility and strategy in resource
  allocation}.
\newblock MIT Press, Cambridge, Massachusetts, 1996.

\bibitem{pham2}
{\sc Vath, V.~L., Pham, H., Villeneuve, S., et~al.}
\newblock A mixed singular/switching control problem for a dividend policy with
  reversible technology investment.
\newblock {\em The Annals of Applied Probability 18}, 3 (2008), 1164--1200.

\bibitem{zervos2}
{\sc Zervos, M.}
\newblock A problem of sequential entry and exit decisions combined with
  discretionary stopping.
\newblock {\em SIAM Journal on Control and Optimization 42}, 2 (2003),
  397--421.

\end{thebibliography}

\end{document}